\definecolor{citecol}{RGB}{145, 1, 1}
\theoremstyle{plain}
\newtheorem {lemma}{Lemma}[section] % add section to get consequence numbering
\newtheorem {theorem}[lemma]{Theorem}
\newtheorem {corollary}[lemma]{Corollary}
\newtheorem {proposition}[lemma]{Proposition}
\theoremstyle{definition}
\newtheorem {remark}[lemma]{Remark}
\newtheorem {example}[lemma]{Example}
\theoremstyle{definition}
\newtheorem{deff}[lemma]{Definition}{}
\newcommand{\op}{\operatorname{op}}
\newcommand{\V}{\operatorname{\mathcal V}}
\newcommand{\rr}{\operatorname{\mathbf r}}
\newcommand{\reg}{\operatorname{reg}}
\newcommand{\ssink}{\operatorname{sink}}
\newcommand{\K}{\mathsf{k}}
\newcommand{\coker}{\operatorname{coker}}
\newcommand{\SP}{\operatorname{SP}}
\begin{document}

\title[Sandpile models and Leavitt path algebras]{Connections between Abelian sandpile models and\\ the $K$-theory of weighted Leavitt path algebras}

%\begin{thanks}
%\end{thanks}

\author{Gene Abrams}
\address{Department of Mathematics\\University of Colorado\\Colorado Springs, CO 80918, USA}
\email{abrams@math.uccs.edu}

\author{Roozbeh Hazrat}\address{
Centre for Research in Mathematics and Data Science\\
Western Sydney University\\
Australia}
\email{r.hazrat@westernsydney.edu.au}

\subjclass[2010]{16S88, 05C57} 
\keywords{Abelian sandpile model, sandpile monoid, weighted Leavitt path algebra, Grothendieck group}

%\begin{comment}

\begin{abstract}
In our main result, we establish that any conical sandpile monoid $M = \SP(E)$ of a directed sandpile graph $E$ can be realised as the $\mathcal{V}$-monoid of a weighted Leavitt path algebra $L_\K(F,w)$ (where $F$ is an explicitly constructed subgraph of $E$), and consequently, the sandpile group $\mathcal{G}(E)$ is realised as the Grothendieck group $K_0(L_\K(F,w))$.  
%We show how to explicitly construct $(E,w)$ from $G$. 
 Additionally, we describe the conical sandpile monoids which arise as the $\mathcal{V}$-monoid of a standard (i.e., unweighted) Leavitt path algebra.  
\end{abstract}

%\end{comment}

\maketitle

\section{Introduction} \label{introkai}

``\emph{Your descendants will be like the dust of the earth, and you will spread out to the west and to the east, to the north and to the south.}" Genesis 28:14. 

The notion of sandpile models encapsulates how objects spread and evolve along a grid. The models were conceived in 1987 in the seminal paper~\cite{bak} by Bak, Tang and Wiesenfeld  as  examples of \emph{self-organized criticality}, or the tendency of physical systems to organise themselves without any input from outside the system, toward critical but barely stable states. 
The models have been used to describe phenomena such as forest fires, traffic jams, stock market fluctuations, etc. The book of Bak~\cite{bakbook} describes how   events in nature apparently follow this type of behaviour. 

 The mathematical formulation of the model is as follows. Consider a sandpile graph, namely a (finite, directed) graph $E$ with a distinguished sink  vertex $s$ such that there is a (directed) path from any vertex of $E$ to $s$. Consider a collection of grains of sand placed on each vertex of the sandpile graph (a {\it configuration}). A vertex is {\it unstable} if it has the same or more grains of sand than the number of edges emitting from it.  In this case the vertex {\it topples}  by sending one grain along each edge emitting from the vertex to each neighbouring vertex.  This toppling may cause neighbouring vertices to become unstable. The assumption is that grains arriving to the vertex $s$ vanish. A configuration is {\it stable} if no vertex is unstable.    Two foundational results used in the theory are: the order of toppling does not matter
% and gives the same configuration 
(thus ``abelian'' sandpile models); and any configuration can be sequentially toppled
% (an avalanche)
 to reach a unique stable configuration. This allows for the construction of  a finite monoid associated to an abelian sandpile model. Let $\SP(E)$ be the set of all stable configurations of a sandpile graph $E$. The ``sum"  of two configurations is interpreted as simply adding the number of grains of sand at each vertex corresponding to the two configurations.
The operation of addition followed by stabilisation endows the set $\SP(E)$  with the structure of a commutative monoid, called the \emph{sandpile monoid}  (see Definition~\ref{sandpilemonoiddef} for a precise definition). 
The smallest set of configurations of $\SP(E)$ which is closed under adding any grains to them (i.e., the recurrent configurations) forms a group, called the \emph{sandpile group} $\mathcal{G}(E)$ associated to $E$. These algebraic structures constitute one of the main themes of the subject.   In a major work~\cite{D1}, D.~Dhar championed the use of $\mathcal{G}(E)$ as an invariant which proved to capture many properties of the model.   A more algebraic study of these monoids and their groups is carried out in \cite{BT,CGGMS}. The books of Klivans~\cite{Kl} and Corry and Perkinson~\cite{corper} give self-contained treatments of the subject of sandpile models.

In a different realm, the notion of Leavitt path algebras $L_\K(E)$ associated to directed graphs $E$, with coefficients in a field $\K$,  were introduced in 2005~\cite{aap05,amp}. These are a generalisation of algebras (denoted by  $L_\K(1,1+k)$)  introduced by William Leavitt in  1962~\cite{vitt62} as a ``universal'' $\K$-algebra  $A$ of type $(1,1+k)$, so that  $A\cong A^{1+k}$ as right $A$-modules, where $k\in \mathbb N^+$. The study of the commutative monoid $\mathcal V(B)$ of isomorphism classes of finitely generated projective right modules over a unital ring $B$ (with operation $\oplus$)  goes back to the work of Grothendieck and Serre~\cite{magurn}. For a Leavitt path algebra $L_\K(E)$, the monoid $\mathcal V(L_\K(E))$ has received substantial attention due to two separate impetuses. On the one hand, these monoids are conical and refinement, and thus related to the realisation problem for von Neumann regular rings~\cite{goodearl}. (The problem asks whether every countable conical refinement monoid can be realised as the monoid of a von Neumann regular ring~\cite[\S 7.2.3]{TheBook}.)  On the other hand,  the group completion of $\mathcal V(L_\K(E))$ is the Grothendieck group $K_0(L_\K(E))$. This group has been used effectively as a complete invariant for a large class of graph $C^*$-algebras (these are the analytic versions of Leavitt path algebras). Thus there is a hope that this group could play a similar role in the algebraic setting as well. Whether $K_0$ classifies these algebras has remained one of the most important and yet elusive questions in the theory of Leavitt path algebras~\cite[\S 7.3.1]{TheBook}.  We will touch on this question in Section \ref{connectionsection}. 

In fact Leavitt established much more in \cite{vitt62} than mentioned in the previous paragraph; indeed, he showed that, for any $n,k \in \mathbb{N}$,  there is a universal $\K$-algebra $A$ of type $(n,n+k)$ (denoted $L_\K(n,n+k)$)  for which $A^n\cong A^{n+k}$ as right $A$-modules.  When $n\geq 2$, this universal algebra is {\it not} realizable as a Leavitt path algebra.   With this in mind, the notion of {\it weighted}  Leavitt path algebras $L_\K(E,w)$ associated to weighted graphs $(E,w)$ were introduced by the second author in 2011 (\cite{H}; see \cite{Pre} for a nice overview of this topic).  The weighted Leavitt path algebras $L_\K(E,w)$  provide a natural context in which all of Leavitt's algebras (corresponding to any pair $n,k \in \mathbb{N}$) can be realised as a specific example.
% for a generalisation of Leavitt's algebras constructed in 1962~\cite{vitt62} as a universal ring $A$ of type $(n,n+k)$ (where $n, k\in \mathbb{N}^+$ and $n\geq 2$), so that   $A^n\cong A^{n+k}$ as $A$-modules. 
As well, the monoid $\mathcal V(L_\K(E,w))$ and the corresponding Grothendieck group $K_0(L_\K(E,w))$ have been completely described in the two works \cite{H} and \cite{P}.  
%~\cite{H,P}. 

In this article we tie the notions of sandpile models and weighted Leavitt path algebras together.   We start in Section \ref{weightedgraphssec} by defining {\it weighted graph monoids}, and show that these include the sandpile monoids.   We then show in Section \ref{confluencesec}  that any weighted graph monoid satisfies the {\it Confluence Property}.  In particular, this property will allow us to identify the unit group of the weighted graph monoid (Proposition \ref{cansurv}).     With these preliminaries established, we move towards our main result 
%(Theorem \ref{conicrit})  
by casting the theory of sandpile monoids in an abstract algebraic setting.  To do so we utilise two foundational works of George Bergman:  the Diamond Lemma~\cite{bergman78}, and the Universal Ring Construction~\cite{B}. The first work will allow us to re-establish that sandpile  configurations have unique normal forms
% (\S\ref{gmonsec}),
(Section \ref{diamondsec}),   while  the second will allow us to realise monoids of certain algebras as graph monoids %(\S\ref{nonstabmon}). 
(Section \ref{wLpasection}, see especially Theorem \ref{kspcts}).  In Section  \ref{connectionsection} we achieve our main result  (Theorem \ref{conicrit}).   The key consequence of Theorem \ref{conicrit} is that,  starting from a sandpile graph $E$ having a specified form ('conical'), and considering $E$ as a  weighted graph $(E,w)$ for an appropriate weight $w$, and considering a specific weighted subgraph $(F,w_r)$ of $E$, we obtain that  $\SP(E) \cong \mathcal V(L_\K(F,w_r))$.  Consequently, we conclude that $\mathcal{G}(E) \cong K_0(L_\K(F,w_r))$ as well.    
%This weighted Leavitt path algebra is the ``universal'' algebra which realises the sandpile monoid (Theorem \ref{conicrit}).    
Furthermore, we identify those sandpile monoids which arise as the $\mathcal{V}$-monoid of a standard (i.e., unweighted)  Leavitt path algebra.  
% whose $\mathcal{V}$-monoids are isomorphic to a sandpile monoid.     

The connection established in Theorem \ref{conicrit}   between sandpile monoids and weighted Leavitt path algebras  allows us to naturally associate an algebra, a \emph{sandpile algebra},  to the theory of sandpile models, thereby 
%potentially  
opening up  an avenue by which to investigate sandpile models via the structure of the sandpile algebras, and vice versa.   
% We take up such an investigation in \cite{AH2}.

% \red{Yes, \cite{AH2} will be the best paper of my career!}

%\red{Mentioning this? Sandpile versus the Mad Vet.} Sandpile process can be thought of a variant of the mad vet puzzle and both process will give the same monoid. The machines in this variant of mad vet will either turn certain number of a given animal to other animals (with the same parity) or  in the process some number of animals will be perished (those are the ones that are connected to the unique sink). 

\section{Weighted graph monoids and sandpile monoids: \ basic properties} \label{weightedgraphssec}

%\subsection{Graphs}\label{graphpremi} 

Throughout we write $\mathbb N$ for the set of non-negative integers, and $\mathbb N^+$ for the set of positive integers.

We start by setting some terminology and notation about graphs.  A \emph{directed graph} is a quadruple $E=(E^0,E^1,s,r)$, where $E^0$ and $E^1$ are sets and $s,r:E^1\rightarrow E^0$ are maps.   Throughout, ``graph" will always mean ``directed graph".  The elements of $E^0$ are called 
\emph{vertices} and the elements of $E^1$ \emph{edges}.   (We allow the empty set to be viewed as a graph with $E^0 = E^1 = \emptyset.$)   If $e$ is an edge, then $s(e)$ is called its \emph{source} and $r(e)$ its \emph{range}. If $v$ is a vertex and $e$ an edge, we say that $v$ {\it emits} $e$ if $s(e)=v$, and $v$ {\it receives} $e$ if $r(e)=v$.   An edge $e$ is called a {\it loop at} $v$ in case $s(e) = v = r(e)$.  
A vertex is called a
% \emph{source} if it receives no edges,
 {\it sink} if it emits no edges, and is called \emph{irrelevant} in case it emits exactly one edge.  A graph is called {\it reduced} if it contains no irrelevant vertices.  
A vertex is called \emph{regular} if it is not a sink and does not emit infinitely many edges.  The subset of $E^0$ consisting of all the regular vertices is denoted by $E^0_{\reg}$. Similarly, the subset of $E^0$ consisting of all the sinks  is denoted by $E^0_{\ssink}$.

The \emph{out-degree} of a vertex $v$ is defined as $|s^{-1}(v)|$. A graph is called \emph{row-finite} if any vertex emits a finite number (possibly zero) of edges. The graph $E$ is called  \emph{finite} if $E^0$ and $E^1$ are finite sets.   In this paper we assume throughout that all graphs are  row-finite,  and thus $E^0_{\reg}$ consists of all vertices which are not sinks.

A  {\it path}  $p$ in $E$ is
a sequence $p=e_{1}e_{2}\cdots e_{n}$ of edges in $E$ such that
$r(e_{i})=s(e_{i+1})$ for $1\leq i\leq n-1$. We define $s(p) = s(e_{1})$, and $r(p) =r(e_{n})$. 
 By definition, the \emph{length} $|p|$ of $p$ is $n$. We assign the length zero to vertices.  A \emph{closed path} (based at $v$) is a  path $p$ such that $s(p)=r(p)=v$. A \emph{cycle} (based at $v$) is a closed path $p=e_1 e_2 \cdots e_n$ based at $v$ such that $s(e_i)\neq s(e_j)$ for any $i\neq j$. 
% An edge $f\in E^1$ is called an \emph{exit} of a cycle $e_1\cdots e_n$ if there is an $1\leq i \leq n$ such that $s(f)=s(e_i)$ and $f\neq e_i$.
 
A collection of graphs which will be the primary focus of this article is now easy to define.   
 
 \begin{deff}\label{sanddefmon}
A finite directed graph $E$ is called a \emph{sandpile graph} if $E$ has a unique sink (denote it by $s$), and for every $v\in E^0$ there is a  path $p$ with $s(p)=v$ and $r(p) = s$.  
\end{deff}
 
 A subset $H \subseteq E^0$ is said to be \emph{hereditary} if
for any $e \in E^1$,  $s(e)\in H$ implies $r(e)\in H$. 
%A subset $H \subseteq E^0$ is called \emph{saturated} if whenever  $r(s^{-1}(v)) \subseteq H$ for a regular vertex $v$, then $v\in H$.  
%If $H$ is a hereditary, we may consider the directed graph, also denoted by  $H$,  consisting of all vertices of $H$ and all edges emitting from these vertices.  
% {\bf XXX Roozbeh:  do we need to assume that $H$ is hereditary and saturated in order to form $E/H$?   Can't we simply define $E/H$ as the subgraph where you eliminate all the vertices in $H$ together with any edges that either start or end in $H$?   We note later in the article that $S$ (set of vertices that don't connect to a cycle) is hereditary and saturated, but do we ever use those properties?}
 For a hereditary subset $H$ of $E$, we define the quotient graph $E/H$ as follows:
  $$(E/ H)^0=E^0\setminus H^0   \ \ \mbox{and} \ \  (E/H)^1=\{e\in E^1\;| \ r(e)\notin H\}.$$ The source and range maps of $E/H$ are the source and range maps  restricted from the graph $E$.
 %We need the following correspondence between the hereditary and saturated subsets of a graph and its quotient graph. 
%For hereditary saturated subsets $H$ and $K$ of the graph $E$ with  $H \subseteq  K$, define the quotient graph $K / H $ as a graph such that 
%$(K/ H)^0=K^0\setminus H^0$ and $(K/H)^1=\{e\in E^1\;|\; s(e)\in K, r(e)\notin H\}$. The source and range maps of $K/H$ are restricted from the graph $E$. If $K=E^0$, then $K/H$ is the \emph{quotient graph} $E/H$ (\cite[Definition~2.4.11]{TheBook}). With this construction, 
% $(K/ H)^0$ is a hereditary and saturated subset of the graph $E/H$, which we also denote $K/H$. 
 We have a natural (inclusion) morphism $\phi:E/H\rightarrow E$ which is injective on vertices and edges.

% The main ring-theoretic notion addressed in this paper is the  concept of  algebras associated to  weighted graphs. 
 %We begin with the definition of weighted graphs. 

\begin{deff}\label{weightedgraphdef}%[Weighted graph, vertex weighted graph, balanced weighted graph]
A \emph{weighted graph} is a pair $(E,w)$, where $E$ is a graph and $w:E^1\rightarrow \mathbb N^+$ is a map. If $e\in E^1$, then $w(e)$ is called the \emph{weight} of $e$. 

For each regular vertex $v$ in a weighted graph $(E,w)$ we set $w(v):=\max\{w(e)\mid e\in s^{-1}(v)\}$.  This gives  a map (called $w$ again) $w:E_{\reg}^0\rightarrow \mathbb N^+$. 

A weighted graph is called a \emph{vertex weighted graph} if for each regular vertex $v$ in the graph, the weights of all edges emitting from $v$  are equal one to the other;  i.e., if $w(e) = w(e')$ for all $e,e' \in s^{-1}(v)$.  In this case $w(v)$
% (sometimes denoted $w_v$) 
coincides with the weight of any edge emitting from $v$.     

A vertex weighted graph is called a \emph{balanced weighted graph} if  $w(v)=|s^{-1}(v)|$ for all $v\in E_{\reg}^0$.   
\end{deff}

% In this note we consider vertex weighted graphs and balanced weighted graphs.   We are interested in certain monoids associated to these weighted graphs. 

\begin{remark}\label{weightonquotientremark}
 Note that any (row-finite)  directed graph can be given the structure of a balanced weighted graph by assigning $w(v)=|s^{-1}(v)|$, for all $v\in E_{\reg}^0$.   (By definition we are not required to assign a weight to any sinks in $E$.)  

In particular, suppose that $H$ is a hereditary 
%and saturated 
subset of the weighted graph $(E,w)$.   We form the quotient graph $E/H$.   Then there are two natural ways to define a weight function on $E/H$:    restrict the weight function on $E$ to $E/H$, or impose the structure of a balanced weighted graph on $E/H$.  We note that if $w$ is a balanced weight function on $E$, then  the restriction function of $w$ to $E/H$ is {\it not} in general the same as  the balanced weight function on $E/H$.    In situations where additional clarity is warranted, we will write $(E/H, w_r)$ to indicate that the weight function being considered on $E/H$ is the restriction function of the weight function $w$ to $E/H$.  
\end{remark}

\smallskip

%\subsection{Monoids} \label{monprel}
  We continue by setting some terminology and notation about monoids.   Let $(M,+)$ be a commutative  monoid.  We define the \emph{algebraic  pre-ordering} on  $M$ by setting $a\leq b$ if $b=a+c$, for some $c\in M$. 
 A commutative monoid $M$ is called \emph{conical} if $a+b=0$ implies $a=b=0$, and  is called \emph{cancellative} if $a+b=a+c$ implies $b=c$ for all $a,b,c\in M$. The monoid $M$ is called \emph{refinement} if, whenever $a,b,c,d \in M$ have $a+b=c+d$, then there are $e_1,e_2,e_3,e_4\in M$ such that $a=e_1+e_2$, $b=e_3+e_4$ and $c=e_1+e_3$, $d=e_2+e_4$. 
An element $0 \not = a\in M$ is called an \emph{atom} if whenever $a=b+c$ then $b=0$ or $c=0$. 
%An element $0 \not=a\in M$ is called \emph{minimal} if $0 \not = b\leq a$ then $a\leq b$. When $M$ is conical and cancellative, these notions coincide with the more intuitive definition of minimality, i.e., $a$ is minimal if $0\not = b\leq a$ then $a=b$. 
We say a commutative monoid is \emph{atom-cancellative} if for any atom $a\in M$, the equality $a+m=a+m'$ implies $m=m'$.

% A submonoid $I$ of $M$ is called \emph{normal} if $a,a+b\in I$ implies $b\in I$. 
%The ideal $I$ is called an \emph{order-ideal} if $a+b\in I$ implies $a,b\in I$. 
Let $Y$ be a submonoid of $M$ (so $Y$ is closed under $+$, and contains $0$). For $a, b\in M$,  write $a\sim_{Y} b$ if there exist $i,j\in Y$ such that  $a+i=b+j$ in $M$. This is a congruence relation  and thus one can form the quotient monoid   $M/\sim$ which we will denote by $M/Y$. 
 The congruence $a\sim_{Y} b$ is equivalent to $(a+Y) \cap (b+Y) \not = \emptyset$. Observe that $a\sim_{Y} 0$ in $M$ for any $a\in Y$.
 % If $I$ is a normal submonoid then $a\sim_I 0$ if and only if $a\in I$. 

A subset $I$ of $M$ is called an \emph{ideal} of $M$ if $m+I\subseteq I$, for any $m\in M$.

For a commutative monoid $M$, the set 
$$Z(M)=\{a\in M \mid a+b=0, \text{ for some } b\in M\}$$
 is an abelian group, called the \emph{unit group} of $M$.  Clearly $Z(M)=0$ if and only if $M$ is conical and $Z(M)=M$ if and only if $M$ is a group. We refer the reader to \cite{RGS} for the general theory of commutative monoids and \cite{wehrung} for  refinement commutative monoids. 

The following simple lemma will be quite useful later, especially in the proof of 
%used to characterise those monoids of Leavitt path algebras that are sandpile (
Proposition~\ref{refinement}. 

\begin{lemma}\label{atomcancel}
Any refinement monoid is atom-cancellative. Furthermore, any monoid which is finite, conical and refinement  has no atoms. 
\end{lemma}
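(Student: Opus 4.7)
The plan is to prove the two assertions in turn, using part (1) directly in part (2).

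For the first assertion, I would start with an atom $a \in M$ and an equation $a + m = a + m'$, then apply the refinement property to this equation. This yields elements $e_1, e_2, e_3, e_4 \in M$ with $a = e_1 + e_2$, $a = e_1 + e_3$, $m = e_3 + e_4$, and $m' = e_2 + e_4$. The key observation is that because $a$ is an atom, each of the two decompositions of $a$ forces one of the summands to be $0$. I would split into cases based on whether $e_1 = 0$ or $e_1 \ne 0$. If $e_1 = 0$, then $a = e_2 = e_3$, giving $m = a + e_4 = m'$. If $e_1 \ne 0$, then the atomicity of $a$ applied to $a = e_1 + e_2$ and $a = e_1 + e_3$ forces $e_2 = e_3 = 0$, so $m = e_4 = m'$. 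Either way $m = m'$, so $M$ is atom-cancellative.

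For the second assertion, I would argue by contradiction: suppose $M$ is finite, conical, refinement, and admits an atom $a \ne 0$. Because $M$ is finite, the sequence $a, 2a, 3a, \ldots$ must have a repetition, so there exist $n \ge 0$ and $k \ge 1$ with $na = (n+k)a$, i.e.\ $na = na + ka$. Applying the atom-cancellativity result from part (1) repeatedly (cancelling the atom $a$ a total of $n$ times, noting that $M$ is refinement so part (1) applies) yields $0 = ka = a + (k-1)a$. Since $M$ is conical, this forces $a = 0$, contradicting the definition of an atom.

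The only substantive step is the case analysis in part (1); the arithmetic in part (2) is routine once atom-cancellativity is in hand. I do not expect any serious obstacle, since the refinement hypothesis is tailored precisely to furnish the four-element decomposition needed to handle an equation $a + m = a + m'$, and atomicity immediately collapses the decomposition into the trivial cases.
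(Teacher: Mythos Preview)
Your argument for the first assertion is essentially identical to the paper's: apply refinement to $a+m=a+m'$, obtain the four $e_i$'s, and case-split on which summand in a decomposition of the atom $a$ vanishes. The paper splits on ``$e_1=0$ or $e_2=0$'' while you split on ``$e_1=0$ or $e_1\neq 0$'', but the computations are the same.

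For the second assertion your route is correct but differs mildly from the paper's. The paper observes that translation $\phi_a:M\setminus\{0\}\to M\setminus\{0\}$, $m\mapsto a+m$, is well-defined (by conicality), injective (by the first part), hence surjective (by finiteness), and then notes that the atom $a$ cannot lie in the image of $\phi_a$ --- a contradiction. You instead extract a relation $na=(n+k)a$ from finiteness, peel off $n$ copies of $a$ via atom-cancellativity to reach $0=ka$, and invoke conicality. Both arguments are one-liners combining the same three ingredients (finiteness, atom-cancellativity, conicality); yours is perhaps more arithmetic in flavour, while the paper's bijectivity argument avoids iterating the cancellation. No substantive gap in either case.
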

\begin{proof}
Let $a,m,m' \in M$, where $M$ is a refinement monoid and $a$ is an atom such that $a+m=a+m'$. Then there are $e_1,e_2,e_3,e_4\in M$ such that $a=e_1+e_2$, $m=e_3+e_4$ and $a=e_1+e_3$, $m'=e_2+e_4$. Since $a$ is an atom, either $e_1=0$ or $e_2=0$. If $e_1=0$ then $e_2=e_3=a$ and thus $m=a+e_4=m'$. If $e_2=0$ then $e_1=a$ and $e_3=0$ which gives that $m=e_4=m'$. 

Now let $M$ be  finite, conical  and refinement. Suppose $a\in M$ is an atom. Consider the translation map $\phi_a: M\backslash \{0\} \rightarrow M\backslash \{0\}, m\mapsto a+m$. Since $a$ is an atom, the first part of the lemma implies $\phi_a$ is injective. Thus $\phi_a$ is surjective as  $M\backslash \{0\}$ is finite. But $a$ can't be in the image of $\phi_a$, a contradiction. 
\end{proof}

Let $M$ be a commutative monoid. The \emph{group completion} of $M$, denoted $\mathcal{G}(M)$,  is the free abelian group generated by elements of the set $\{[a]\mid  a\in M\}$ subject to the relations $[a+b]-[a]-[b]$, for any $a,b\in M$. The homomorphism $M\rightarrow \mathcal{G}(M); m\mapsto [m]$ is initial among all such maps~\cite{magurn}. The {\it Grothendieck group} of a unital ring $A$, denoted $K_0(A)$, is by definition  the group completion of the monoid $\mathcal V(A)$. When the monoid $M$ is finite, the group completion $\mathcal{G}(M)$ is in fact a subgroup of $M$.

\begin{lemma}\label{grothenlem}
Let $M$ be a finite commutative monoid. Then the group completion $\mathcal{G}(M)$  of $M$ is isomorphic to the smallest ideal of $M$. 
\end{lemma}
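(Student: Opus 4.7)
The plan is first to establish that $M$ has a unique smallest ideal $K$, then to show $K$ is an abelian group, and finally to identify $K$ with $M^+$ via a universal property argument.

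For the existence of $K$: since $M$ is finite it has only finitely many ideals, and the intersection of any two ideals $I_1,I_2$ is non-empty (if $i_j\in I_j$ then $i_1+i_2$ lies in both $I_1$ and $I_2$) and again closed under addition from $M$, hence is itself an ideal. Intersecting all ideals of $M$ therefore produces a unique minimum ideal $K$.

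For the group structure on $K$: for each $a\in K$, the set $a+K$ is an ideal of $M$ contained in $K$, so by minimality $a+K=K$. Finiteness of $K$ then makes the translation $\phi_a:K\to K$, $k\mapsto a+k$, a bijection. In particular there is a unique $e_a\in K$ with $a+e_a=a$; adding $e_a$ once more and invoking injectivity of $\phi_a$ forces $e_a+e_a=e_a$, and then surjectivity of $\phi_a$ on $K$ lets one check directly that $e_a$ is a two-sided identity on $K$. Such an identity is unique, so call it $e$. Surjectivity of $\phi_a$ for any $a\in K$ also supplies an inverse to $a$, and therefore $(K,+,e)$ is an abelian group.

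For the universal property: define $\pi:M\to K$ by $\pi(m)=m+e$. The idempotence $e+e=e$ makes $\pi$ a monoid homomorphism, and $\pi|_K=\id_K$, so $\pi$ is surjective. Given any monoid homomorphism $f:M\to G$ with $G$ an abelian group, the identity $f(e)+f(e)=f(e)$ in the group $G$ forces $f(e)=0$, whence $f(m)=f(m)+f(e)=f(m+e)=(f|_K)(\pi(m))$. Thus $f$ factors uniquely through $\pi$, which is precisely the universal property of the group completion, and so $K\cong M^+$.

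I expect the main obstacle to be the group-structure step, specifically the extraction of the idempotent identity $e\in K$ from the minimality of $K$ together with the finiteness of $M$; once $e$ is in hand, the bijectivity of each $\phi_a$ yields inverses, and both the surjectivity of $\pi$ and the universal factorization follow essentially formally.
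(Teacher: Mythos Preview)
Your proof is correct and follows essentially the same route as the paper: identify the smallest ideal, show it is a group with identity $e$ (the paper denotes it $z$), and verify that the map $m\mapsto m+e$ satisfies the universal property of the group completion. The paper is slightly terser---it characterizes the smallest ideal as $\bigcap_{a\in M}(a+M)=\{a\in M\mid a\geq b \text{ for all } b\in M\}$ and invokes this description to get the group structure---whereas you extract the idempotent $e$ more explicitly via the translation bijections $\phi_a$; but these are cosmetic differences in the same argument.
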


\begin{proof}

Let $I$ denote $\bigcap_{a\in M}(a+M)$.   Since $M$ is finite, $t:= \sum_{a\in M} a$ is a well-defined element of $ M$, and clearly $t \in I$, so that $I \neq \emptyset$.  It is easy to show that $I$ is an ideal of $M$.      Let $J$ be any ideal of $M$.  For each $a\in J$ and $m\in M$ we have $m+a \in J$, so $a+M \subseteq  J$, so $I\subseteq J$.   So $I$ is the smallest ideal of $M$.   

We show that $I$ is a group.  For each $x\in I$, the subset $I+x := \{\iota + x \ | \ \iota \in I\}$ is an ideal of $M$ contained in $I$, so $I+x = I$.   In particular there exists $z\in I$ with $z+x = x$.   As well, for each $y\in I$ there is $u \in I$ with $y = u+x$.  Then $y+z = u + x + z =  u + z + x = u + x = y$, so $z$ is an identity element for $I$.   But $I + x = I$ also gives that there exists $x' \in I$ with $x' + x = z$.   So $I$ is a group.

Define $\phi: M \to I$ by setting $\phi(m) = m+z$ for each $m\in M$.   (Note that $m+z \in I$ as $z\in I$ and $I$ is an ideal of $M$.)   Since $z+z = z$, $\phi$ is a monoid homomorphism.  Let $\Gamma$ be any abelian group, and let   $\psi:M \rightarrow \Gamma$ be a monoid homomorphism.   
%  For any abelian group $\Gamma$,   any homomorphism $\psi:M \rightarrow \Gamma$  uniquely factors through $\phi
Then easily $\psi = \psi |_I  \circ \phi  $.    
%= \M\rightarrow H(M); m \mapsto z+m$ via the group homomorphism $\psi |_{H(M)}.$ 
Thus  $I\cong \mathcal{G}(M)$.
\end{proof}

%\begin{proof}
%Let $I$ be the smallest ideal of $M$. Note that $I=\bigcap_{a\in M}(a+M)$ and, since $M$ is finite, $\sum_{a\in M} a \in I$ and so $I$ is a nonempty set. Furthermore $I$ coincides with the set 
%$H(M):=\{a\in M \mid a \geq b, \text{ for all } b\in M\}$.     Since every element  $x\in H(M)$ has the property that for every $x' \in H(M)$ there exists $y\in H(M)$ with $x + y = x'$, it is standard to show that $H(M)$ is a group, whose identity element we denote by $z$. 
% It is easy to show that $H(M)$ is a group with a zero element, denoted $z$. 
%For any abelian group $\Gamma$,   
%any homomorphism $\psi:M \rightarrow \Gamma$  uniquely factors through $\phi
%:M\rightarrow H(M); m \mapsto z+m$ via the group homomorphism $\psi |_{H(M)}.$ This shows that $I\cong \mathcal{G}(M)$.
%\end{proof}

\begin{example}\label{Mplusexample}
We clarify some of the previous observations in the context of a specific example, one to which we will make reference often in the sequel.   For $n,k \in \mathbb{N}^+$, let $M_{n,n+k}$ denote the finite commutative monoid
$$M_{n,n+k} := \{ 0, x, 2x, \dots , nx , \dots, (n+k-1)x \} , \ \  \ \ \mbox{with relation} \ (n+k)x = nx.$$
Then $M_{n,n+k}$ is clearly conical, and not cancellative.  The atoms of $M_{n,n+k}$ consist of the elements  $x, 2x, \dots, (n-1)x$.  (In particular $M_{1,1+k}$ has no atoms.)  The monoid  $M_{n,n+k}$ is refinement if and only if $n=1$.  (To show that  $M_{n,n+k}$ is not refinement for $n\geq 2$, consider the equation $x + (n-1)x = x + (n+k-1)x$ in $M_{n,n+k}$.)

Finally, the      smallest ideal of $M_{n,n+k}$     consists of the subset $\{ nx, \dots , (n+k-1)x\}$.    By Lemma \ref{grothenlem} this is also then the group completion $\mathcal{G}(M_{n,n+k})$ of $M_{n,n+k}$ .   So we see that  $\mathcal{G}(M_{n,n+k})$ is isomorphic to the cyclic group of order $k$:  it  is easy to show that the identity element of this group is  $tx$, where $t$ is the unique multiple of $k$ appearing in the integer interval  $[n, n+k-1]$, and that $(t+1)x$ is a generator for $\mathcal{G}(M_{n,n+k})$.   
 \end{example}

%\subsection{Graph monoids}\label{gmonsec}

We complete this section by tying together the two previously presented themes, by associating 
certain monoids to  weighted graphs.   
Once the specific description and basic properties of these graph monoids have been established, in the subsequent two sections we'll look at the Confluence  and the Reduction-Uniqueness properties, respectively.

%In Lemma~\ref{aralem6}, we prove that they have confluence properties, namely two different transformations of an element can always be transformed further to the same outcome. From these graph monoids, we single out those that the elements stabilise with a unique normal forms. These are monoids which arise from sandpile models. 

\begin{deff}\label{genmon}
Let $(E,w)$ be a  weighted graph.
%, with weight function $w:E_{\reg}^0\rightarrow \mathbb N^+$. 
 We assign $w(v)=1$ if $v$ is a sink. The \emph{reduced graph monoid} $M(E,w)$ associated to $(E,w)$ is defined to be 
  \begin{equation}\label{monvertex}
M(E,w) \  \ := \  \ \mathbb F_E \  \Big /  \ \Big \langle w(v)  v= \sum_{e \in s^{-1}(v)} r(e) \, \big | \, v\in E^0\Big \rangle.
\end{equation}
Here $\mathbb F_E$ is the free commutative monoid on the set $E^0$ of vertices of $E$.  (If $E^0$ is the empty set then we interpret $\mathbb F_E$ as the zero monoid.)

Note that if $s$ is a sink, then since the summation in~(\ref{monvertex}) is over an empty set, the relation  corresponding to $s$ in $M(E,w)$ reduces to:  \  $s=0$.    (This is why we use the terminology {\it reduced} graph monoid to describe $M(E,w)$.)  
\end{deff}

\begin{example}\label{weightedgraphmonoidexample}  Let $n,k \in \mathbb{N}^+$.   Let $E_{n,n+k}$ denote the vertex weighted graph having one vertex $v$, with $n$ loops at $v$, each having weight $n+k$.  Pictorially, $E_{n,n+k}$ can be viewed as 
\begin{equation*}
\xymatrix{
& \bullet \ar@{.}@(l,d) \ar@(ur,dr)^{e_1; \ w(e_1)=n+k} \ar@(r,d)^{e_2; \ w(e_2)=n+k} \ar@(dr,dl)^{} \ar@(l,u)^{e_n; \ w(e_n)=n+k}& 
}
\end{equation*}
%\begin{equation*}
%\xymatrix{
%& \bullet \ar@{.}@(l,d) \ar@(ur,dr)^{y_{1,1},\dots,y_{n+k,1}} \ar@(r,d)^{y_{1,2},\dots,y_{n+k,2}} \ar@(dr,dl)^{y_{1,3},\dots,y_{n+k,3}} \ar@(l,u)^{y_{1,n},\dots,y_{n+k,n}}& 
%}
%\end{equation*}
So $w(v) = n+k$, and 
$$M(E_{n,n+k}) :=   \mathbb{F}_{v} \  \Big / \ \langle (n+k)v = nv \rangle \  \cong \ M_{n,n+k},$$
where $M_{n,n+k}$ is the monoid defined in Example \ref{Mplusexample}. 
\end{example}

\begin{remark}\label{infiniteMremark}
We note that even if $E$ is finite, certainly $M(E,w)$ might be infinite.  As an easy example, if $E$ is the graph with one vertex $v$ and one loop $e$ based at $v$ having $w(e)=1$, then $w(v) = 1$ and  \ $M(E,w) = \mathbb{F}_{v} / \langle 1v = v\rangle \cong \mathbb{N}$. 
\end{remark}

There is an explicit description of the congruence on $\mathbb F_E$ given by the defining relations of $M(E,w)$ in (\ref{monvertex}), as follows. For $v\in E^0$, with  weight $w(v)$, define the ``$\rr$-transform on $\mathbb{F}_E$" by setting  
\begin{equation}\label{shtrans}
\rr(w(v) v) \ : = \ \sum_{e\in s^{-1}(v)}r(e).
\end{equation}
 The nonzero elements of $\mathbb F_E$ can be written uniquely up to permutation as $\sum_{i=1}^{n}k_iv_{i}$, where $v_{i}$ are distinct vertices and $k_i\in \mathbb N^+$. Define a binary relation
$\rightarrow_{1}$ on $\mathbb F_E$ by 
\begin{equation}\label{hfgtrgt655}
\sum_{i=1}^{n}k_iv_{i} \ \longrightarrow_{1} \  \Big( \sum_{i\neq
j}k_iv_{i} \Big) +(k_j-w(v_j))v_j+ \rr(w(v_j)v_j),
\end{equation}
whenever $j\in \{1, \cdots, n\}$ and 
$k_j\geq w(v_{j})$.   (Clearly the transformation described by $\rightarrow_1$ models the toppling process at $v_j$  in the sandpile model.)    Let $\rightarrow$ be the transitive and reflexive closure of $\rightarrow_{1}$
on $\mathbb F_E$.  Namely 
\begin{equation}\label{hfgtrgt6551}
a\rightarrow b    \ \  \ \text{ if }  a=b, \ \mbox{or}  \ a=a_0 \rightarrow_1 a_1 \rightarrow_1 \dots \rightarrow_1 a_k=b.
\end{equation}

\noindent
Finally, let   $\sim$   be the congruence on $\mathbb F_E$ generated by the relation $\rightarrow$.   That is, $a\sim b$ in case  there is a string $a=a_0, a_1,\dots, a_n=b$ in $\mathbb F_E$ such that $a_i\rightarrow_1 a_{i+1}$ or $a_{i+1}\rightarrow_1 a_{i}$ for each $0\leq i \leq n-1$.   Then 
$$M(E,w)=\mathbb F_E/\sim.$$
\noindent
To avoid cumbersome equivalence class notation,  is standard (but not technically correct) to denote the elements of $M(E,w)$ and the elements of $\mathbb F_E$ using the same symbols.  For instance, we  will sometimes write $a=b$ in $M(E,w)$ for  elements $a,b \in \mathbb F_E$.

\begin{deff}\label{sandpilemonoiddef}
For a sandpile graph $E$, the {\it sandpile monoid} $\SP(E)$    has been described informally in the Introduction; we revisit that description, and provide here a formal definition.   
% is an example of a graph monoid. Recall  that $\SP(E)$ 
(See e.g. \cite{BT} or \cite{CGGMS} for additional information).  For $k_j \in \mathbb{N}$ we envision  $k_j$ grains of sand sitting on each non-sink vertex $v_j$ of $E$. 
% (denote them by $k_j v_j$).  
When the number of grains of sand sitting at $v_j$  is equal to or larger than the number of edges emitting from $v_j$, then   $v_j$  fires one grain of sand along each edge emitted by $v_j$  to its  adjacent vertices.     Further, any grains which arrive at the sink $s$ are understood to vanish.   

More formally, $\SP(E)$ is the monoid defined by generators and relations as
\begin{equation}\label{SP(E)def}
\SP(E) \ \ :=  \ \ \mathbb F_E \ \Big / \ \Big \langle  s = 0;  \     |s^{-1}(v)| v= \sum_{e \in s^{-1}(v)} r(e), \ \mbox{for} \  v\in E^0\ \setminus\{s\} \Big \rangle.
\end{equation}
Furthermore, the {\it sandpile group} $\mathcal{G}(E)$ is defined by setting
$$\mathcal{G}(E) := \mbox{ the smallest ideal of }  \SP(E).$$

%arising as the quotient of  $\mathbb{F}_E$ by  the relations: 
%\begin{equation}\label{sandrulem}
%s \longrightarrow 0; \ \ \ \ \ \mbox{and} \ \ \  |s^{-1}(v_j)| v_j  \ \longrightarrow \ 
%(k_j-|s^{-1}(v_j)|)v_j+
% \rr(|s^{-1}(v_j)| v_j).    
%\end{equation}
\noindent
%whenever $v_j \neq s$ and $k_j  \geq |s^{-1}(v_j)|$.  
\end{deff}

\begin{example}\label{Mnn+kissandpile}
Let $G$ be the graph pictured here.    
$$\xymatrix{
\!\!\!    \bullet^s   &   \bullet^x \ar@{.}[l]  \ar@{.}@/_{5pt}/ [l]  \ar@/_{10pt}/ [l]_{f_1}  \ar@/^{10pt}/ [l]^{f_k}     \ar@{.}@(l,d) \ar@(ur,dr)^{e_{1}} \ar@(r,d)^{e_{2}} \ar@(dr,dl)^{e_{3}} 
\ar@{.}@(l,u) \ar@(u,r)^{e_n}  \ar@{.}@(ul,ur)& 
}$$
So $G$ has one sink $s$, and one non-sink vertex $x$, with $n$ loops based at $x$, and $k$ edges from $x$ to $s$.    Thus $G$ is a sandpile graph.   Moreover,  
$$\SP(G) = \mathbb{F}_{x,s} \Big / \langle s = 0; \  (n+k)x = nx + ks \rangle \  \cong M_{n,n+k},$$
 where $M_{n,n+k}$ is the monoid defined in Example \ref{Mplusexample}.  
\end{example}

\begin{remark}\label{sandpileex} 
 {\bf  (A key connection) }    Let $E$ be a sandpile graph.   
Considering $E$ as a balanced weighted graph (so  $w(v)=|s^{-1}(v)|$ for each non-sink vertex $v$ of $E$),   and defining $w(s)=1$ for the sink $s$,   Definitions  \ref{genmon}   and  \ref{sandpilemonoiddef} immediately give 
%  firing rules (\ref{sandrulem}) become exactly the relations~(\ref{hfgtrgt655}).  Thus 
$$\SP(E) = M(E,w).$$ 
%This connection is described for the graphs $G$ and $E$ of Examples \ref{Mnn+kissandpile} and \ref{weightedgraphmonoidexample}, respectively.  
So every sandpile monoid is the monoid of a (balanced) weighted graph.  The converse is certainly not true: as one justification, we will show below that the monoid $\SP(E)$ is finite for any finite graph $E$, while $M(E,w)$ need not be finite in general (see Remark \ref{infiniteMremark}).  

Of course it is not coincidental that the monoid $M(E,w)$ of Example \ref{weightedgraphmonoidexample} is the same as the monoid $\SP(G)$ of Example \ref{Mnn+kissandpile}.  We will make the connection precise in Theorem \ref{conicrit}.   
\end{remark}

%\bigskip

%{\bf XXX Roozbeh:  note that if we have $n$ loops each of weight $n+k$ in the Leavitt algebra example then when we make the connection between sandpile monoids and $M(E,w)$ at least the weights of the edges are the same.}

%\bigskip

\begin{remark}\label{sandpilemonoidremark}
Let $E$ be a sandpile graph.  Suppose $v$ is an irrelevant vertex in $E$, and denote by $w$ the vertex $r(e)$, where $e$ is the unique edge having $s(e) = v$.   By ``collapsing" $E$ at $v$ we mean eliminating $v$ and $e$ from $E$, and, for any edge $f$ in $E$ having $r(f) = v$, we assign $r(f) = w$.   Let $G$ be the graph gotten from $E$ by continuing to collapse at each vertex until no irrelevant vertices remain.    Then clearly $G$ is a reduced sandpile graph, and $\SP(E) \cong \SP(G)$.  In particular, when studying the structure of  $\SP(E)$ we may assume without loss that $E$ is reduced.  
\end{remark}

\section{The Confluence Property}\label{confluencesec}

In this section we show that the reduced graph monoids $M(E,w)$ have a certain ``confluence"  property. Informally, this means that whenever two elements $a,b$ are equal when viewed in the quotient monoid $M(E,w)=\mathbb F_E/\sim$, then there is a common element in  $\mathbb F_E$ to which both $a$ and $b$ flow.      The proof of the Confluence Lemma 
%(Lemma \ref{aralem6}) 
we present here is similar to the proof of the corresponding result in the case of the graph monoid $M_E$ for an unweighted graph $E$ ~\cite[\S 6.3]{amp}, but with the added complexity of having weights and sinks in the relations.  %Note that in $M_E$ we have that   $a=a_1+a_2$ and $a\rightarrow b$, where $a, a_1, a_2, b \in \mathbb F_E \backslash \{ 0 \}$,  then $b$ can be written as $b=b_1+b_2$ with $a_1\rightarrow b_1$  and $a_2\rightarrow b_2$ (see \cite[Lemma 4.2]{amp}) which was used in the proof of confluence Lemma \cite[Lemma 4.3]{amp}.  This is not the case in general in $M(E,w)$, however the proof of the Lemma~\ref{aralem6} follows the same strategy as  in \cite[Lemma 4.3]{amp} by replacing $n_v v$ in each transformation. 

\begin{lemma}[The Confluence Lemma]\label{aralem6}
Let $(E,w)$ be a 
%row-finite 
weighted graph and $M(E,w)$ its associated monoid. For $a, b \in \mathbb F_E$, we have   $a=b$ in $M(E,w)$  (i.e., $a\sim b$ in $\mathbb F_E$)  if and only if there exists  $c \in \mathbb F_E$ such that  $a \rightarrow c$ and $b\rightarrow c$. 

In particular, any sandpile monoid satisfies this ``Confluence Property".  
\end{lemma}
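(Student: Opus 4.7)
The plan is to promote the one-step reduction $\rightarrow_1$ to a strong local ``diamond'' property, then bootstrap this to full confluence of $\rightarrow$, and finally verify that the relation ``flows to a common element'' is itself a congruence containing the defining relations of $M(E,w)$.

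The $(\Leftarrow)$ direction is immediate, since $a\rightarrow c$ and $b\rightarrow c$ each lie inside the congruence $\sim$. For $(\Rightarrow)$, I would introduce the auxiliary relation $a\approx b$, meaning that there exists $c\in\mathbb F_E$ with $a\rightarrow c$ and $b\rightarrow c$, and aim to show $\sim\,\subseteq\,\approx$. The key lemma is the \emph{diamond property}: whenever $a\rightarrow_1 b_1$ via toppling at some $v_j$ and $a\rightarrow_1 b_2$ via toppling at some $v_{j'}$, there exists $c\in\mathbb F_E$ with $b_1\rightarrow_1 c$ (or $b_1=c$) and $b_2\rightarrow_1 c$ (or $b_2=c$). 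If $v_j=v_{j'}$ the two reductions agree and we take $c=b_1=b_2$. If $v_j\neq v_{j'}$, the essential observation is that after toppling at $v_j$, the coefficient of $v_{j'}$ in $b_1$ can only increase (the nonnegative contribution coming from $\rr(w(v_j)v_j)$), so $k_{j'}\geq w(v_{j'})$ still holds in $b_1$; thus we may still topple at $v_{j'}$ in $b_1$, and symmetrically at $v_j$ in $b_2$. A direct computation shows both second topplings produce the same element $c$, obtained from $a$ by subtracting $w(v_j)v_j+w(v_{j'})v_{j'}$ and adding $\rr(w(v_j)v_j)+\rr(w(v_{j'})v_{j'})$. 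This is precisely the ``abelian'' feature of the sandpile dynamics packaged into the Diamond Lemma framework.

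Once the local diamond is in hand, I would deduce confluence of $\rightarrow$ by a standard tiling argument: given $a=a_0\rightarrow_1\cdots\rightarrow_1 a_m=b_1$ and $a=a'_0\rightarrow_1\cdots\rightarrow_1 a'_n=b_2$, one fills in an $m\times n$ grid of diamonds by iterating the local property, producing a common descendant $c$ with $b_1\rightarrow c$ and $b_2\rightarrow c$. With confluence established, $\approx$ is visibly reflexive and symmetric; transitivity follows by applying confluence at the midpoint (if $a,b$ both reduce to $d_1$ and $b,c$ both reduce to $d_2$, then applying confluence to $b\rightarrow d_1$ and $b\rightarrow d_2$ yields $e$ with $d_1\rightarrow e$ and $d_2\rightarrow e$, giving $a\rightarrow e$ and $c\rightarrow e$). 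Additive compatibility is immediate, because $a\rightarrow c$ implies $a+x\rightarrow c+x$ for every $x\in\mathbb F_E$. Finally, each defining relation $w(v)v=\rr(w(v)v)$ is witnessed by the single reduction step $w(v)v\rightarrow_1\rr(w(v)v)$, so it lies in $\approx$; hence the congruence $\sim$ generated by these relations is contained in $\approx$, proving the lemma. The ``In particular'' clause follows because $\SP(E)=M(E,w)$ under the balanced weight, as noted in Remark~\ref{sandpileex}.

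The principal obstacle is the two-vertex case of the diamond property. There I need to ensure that no toppling threshold gets violated after the first reduction step, and that the two orderings of topplings produce literally the same element of $\mathbb F_E$; this is where the sink-relation $s=0$ (corresponding to $w(s)=1$ with an empty right-hand sum) and the possible repetition of $v_j$ or $v_{j'}$ among the $r(e)$ must be handled with some care, but the argument is otherwise a direct bookkeeping check. Newman's lemma is not available here since $\rightarrow$ need not be terminating (Remark~\ref{infiniteMremark}), which is precisely why the stronger single-step diamond, rather than merely local confluence, is required.
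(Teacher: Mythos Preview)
Your argument is correct, but it follows a different route from the paper. The paper argues by induction on the length $n$ of a zig-zag string $a=a_0,a_1,\dots,a_n=b$ witnessing $a\sim b$: assuming inductively that $a\rightarrow c$ and $a_{n-1}\rightarrow c$ for some $c$, the only nontrivial case is $a_{n-1}\rightarrow_1 b$ via toppling some $kv$; the paper then \emph{tracks this particular copy of $kv$} along the chain $a_{n-1}\rightarrow c$, distinguishing whether it ever gets toppled (Case~2) or not (Case~1), and in either case produces a common successor of $b$ and $c$. By contrast, you first isolate the one-step diamond (two distinct topplings commute because neither lowers the other's coefficient), tile to obtain confluence of $\rightarrow$, and then verify that the ``common-descendant'' relation $\approx$ is a congruence containing the defining relations, whence $\sim\subseteq\approx$. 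Your approach is more modular and makes the abelian nature of the dynamics explicit; it also correctly recognizes that Newman's lemma is unavailable (no termination, cf.\ Remark~\ref{infiniteMremark}) and that strong one-step confluence is exactly what the tiling needs. The paper's approach avoids invoking the tiling machinery and instead re-proves the needed special case of confluence (one side a single step) directly inside the induction; this is shorter to state but less transparent about why it works.
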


\begin{proof}
The sufficiency follows directly from the definition of $\sim$, as it is  the symmetric and transitive closure of the relation $\rightarrow_1$.

To establish the necessity, we start by observing the following   fact. Let $a$ be an element of $\mathbb F_E\backslash \{0\}$ such that $kv$ appears in its presentation, where $v$ is a vertex having  weight $k$. Writing $a=kv+a'$ then by definition (\ref{hfgtrgt655}) a transformation $\rightarrow_1$  can take either the form $a\rightarrow_1 \rr(kv)+a'$ or $a\rightarrow_1 kv+a''$, where $a'\rightarrow_1a''$. 

Now suppose $a=b$ in $M_{(E,w)}$. Then there is a string $a=a_0, a_1,\dots, a_n=b$ in $\mathbb F_E$ such that $a_i\rightarrow_1 a_{i+1}$ or $a_{i+1}\rightarrow_1 a_{i}$, where $0\leq i \leq n-1$. We argue by induction on $n$.  If $n = 0$, then $a=b$ and there is nothing to prove. 
Suppose the statement  holds for strings of length $n-1$ and let $a=a_0, a_1,\dots, a_n=b$ be a string of length $n$. By induction, there is a $c$ such that $a_0\rightarrow c$ and $a_{n-1}\rightarrow c$. We consider two cases: 

If $b=a_n\rightarrow_1 a_{n-1}$, then clearly $a_0\rightarrow c$ and $b\rightarrow c$ and we are done. 

If $a_{n-1}\rightarrow_1 b$, then by the definition~(\ref{hfgtrgt655}), there is a vertex $v$ with weight $k$ such that $kv$ appears in the presentation of $a_{n-1}$, i.e., $a_{n-1}=kv+a'_{n-1}$ and $b=\rr(kv)+ a'_{n-1}$. 
Since $a_{n-1} \rightarrow c$ there is a string of transformations 
\begin{equation}\label{seqconfu}
a_{n-1}=kv+a_{n-1}' = c_0 \longrightarrow_1 c_1 \longrightarrow_1 \dots \longrightarrow_1 c_l=c.
\end{equation}
We consider two cases.  

\underline{Case 1.}   Going along the displayed string (\ref{seqconfu}) if there is no transformation of this $kv$ in each of the $c_i$, $0\leq i \leq l$,  then $kv$ appears in the presentation of each of the $c_i$ and therefore we have a chain 
\begin{equation}\label{firsttran}
a_{n-1}' = c_0' \longrightarrow_1 c_1' \longrightarrow_1 \dots \longrightarrow_1 c_l',
\end{equation}
where $c_i=kv+c_i'$, $0\leq i \leq l$. %Note that if in $kv+c_s'$ one writes $k_1v+k_2v +$  

 Now we apply one more transformation on $c_l=kv+c_l' \rightarrow \rr(kv)+c_{l}'$. Since $b=\rr(kv)+a_{n-1}'$, applying the same transformations~(\ref{firsttran}) we have 
 \[b=\rr(kv)+a_{n-1}'\longrightarrow_1 \rr(kv)+c_1' \longrightarrow_1 \cdots \longrightarrow_1 \rr(kv)+c_{l-1}' \longrightarrow_1 \rr(kv)+c_l'\]
Therefore $a \rightarrow c=c_l=kv+c_l' \rightarrow \rr(kv)+c_l'$ and $b\rightarrow \rr(kv)+c_l'$ and we are done. 

\underline{Case 2.}  Suppose $0\leq s \leq l-1$ is the first instance in the string (\ref{seqconfu}) that $\rightarrow_1$ transforms this $kv$ to $\rr(kv)$. Thus we have a chain 
\begin{equation}\label{firsttran2}
a_{n-1}' = c_0' \longrightarrow_1 c_1' \longrightarrow_1 \dots \longrightarrow_1 c_s',
\end{equation}
where $c_i=kv+c_i'$ 
and \[a_{n-1}=kv+c_0'  \longrightarrow_1 kv + c_1' \longrightarrow_1 \dots \longrightarrow _1 kv+ c_s' \longrightarrow_1 \rr(kv)+c_s'=c_{s+1}\longrightarrow_1\dots \longrightarrow_1 c_l=c\]
Since $b=\rr(kv)+a_{n-1}'$, applying the same transformations~(\ref{firsttran2}) we have 
 \[b=\rr(kv)+a_{n-1}'=  \rr(kv)+c_0' \longrightarrow_1 \cdots \longrightarrow_1 \rr(kv)+c_s' =c_{s+1}\longrightarrow_1\dots \longrightarrow_1 c_l=c\]
Therefore $a\rightarrow c$ and $b\rightarrow c$ and we are done. 

The final statement follows from Remark \ref{sandpileex}.  
\end{proof}

Let $(E,w)$ be a weighted graph and  $H$ a hereditary 
%and saturated 
subset of $E$.  Consider $(H,w)$, the weighted subgraph of $(E,w)$ consisting of all vertices of $H$ and all edges emitting from these vertices, with the same weights as in $E$. 
%(see~\S\ref{graphpremi}).
%(see~\S\ref{weightedgraphssec}).  
Observe that there is a well-defined monoid homomorphism $M(H,w)\rightarrow M(E,w); a \mapsto a$.  For $a,b \in M(H,w)$ if $a=b$ in $M(E,w)$, an application of the Confluence Lemma~\ref{aralem6} shows there is a $c\in \mathbb F_E$ such that $a\rightarrow c$ and $b\rightarrow c$. But since $H$ is hereditary,
% and saturated, 
all the transformations occur already in $H$, and thus $a=b$ in $M(H,w)$. Thus we can consider $M(H,w)$ as a submonoid of $M(E,w)$.   Consequently:  
 
\begin{remark}\label{Sisheredsat}
Let $(E,w)$ be a weighted graph.  Let $S$ denote the subset of $E^0$  consisting of those vertices which do not connect to any cycle in $E$.   Then clearly $S$ is hereditary. 
%and saturated.  
 So by the above observation we can consider the monoid $M(S,w)$ as a submonoid of $M(E,w)$. 
Furthermore, we note for later use that the graph $E/S$ is easily seen to contain no sinks.  
\end{remark}

As another application of the Confluence Lemma~\ref{aralem6}, we  determine the unit group of  reduced graph monoids.

\begin{proposition}\label{cansurv}
Let $(E,w)$ be a finite weighted graph, $M(E,w)$ its associated reduced graph monoid and $S$ the set of all vertices in $E$ which do  not connect to any cycle in $E$. 
Then $$Z(M(E,w))=M(S,w).$$   Furthermore,  if $(E,w)$ is a finite vertex weighted graph, then 
\begin{equation}\label{quowei}
M(E,w) \big / Z(M(E,w))  =  M(E,w) \big / M(S,w) \cong M(E/S,w_r),
\end{equation}
where $w_r$ is the restriction of the weight function $w$ to $E/S$.  
\end{proposition}
\begin{proof}
By Remark \ref{Sisheredsat}  we have $M(S,w)\subseteq M(E,w)$.  If $S$ is not empty, then any path emitting from an element $v$ in $S$ can be extended to a path that ends in a sink, as $E$ is finite and $v$ is not connected to a cycle.  An easy induction (on the maximum length of the paths connecting to sinks) shows that $kv=0$, for some $k\in \mathbb N^+$. Thus $M(S,w) \subseteq Z(M(E,w))$. 

Conversely, let $a\in Z(M(E,w))$. Then $a+b=0$ in $M(E,w)$  for some $b\in M(E,w)$. The Confluence Lemma~\ref{aralem6}  implies that $a+b\rightarrow 0$ in $\mathbb{F}_E$. Seeking a contradiction, suppose  that there is a vertex $w\in E^0\backslash S$ appearing in the presentation of $a$. Since $w$ is connected to a cycle, any possible transformations of $w$ or its multiple would give a vertex on a cycle and subsequently any further transformations always contain a vertex on a cycle. Thus $a+b$ can't be transformed to $0$, and so  $a\in M(S,w)$. 

The monoid isomorphism given in  (\ref{quowei}) (which is valid for any hereditary 
%and saturated 
subset of $E$, of which $S$ is a particular case) is easy to check, and is left  to the reader. 
\end{proof}

We can now establish  the following corollary,  which will be used to connect sandpile monoids to weighted Leavitt path algebras in  Theorem~\ref{conicrit}. 

\begin{corollary}\label{corconic}
Let $M(E,w)$ be the reduced graph monoid associated to a finite weighted graph $(E,w)$ and $S$ the set of  vertices in $E$ which do not connect  to any cycle.

\begin{enumerate}[\upshape(i)]

\item $M(E,w)$ is a  group if and only if $E$ is acyclic. 

\item $M(E,w)$ is conical if and only if every non-sink vertex in $S$ has weight one. 
\end{enumerate}

\end{corollary}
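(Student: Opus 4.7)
The plan is to derive both parts from Lemma~\ref{cansurv} (which identifies $Z(M(E,w))$ with $M(S,w)$) together with the Confluence Lemma~\ref{aralem6}, via one common technical observation: if $v\in E^0\setminus S$, then every element $c\in\mathbb F_E$ with $v\to c$ has at least one vertex from $E^0\setminus S$ in its support. This is established by a short induction on the length of the transformation chain, since any vertex $u$ connecting to a cycle has at least one emitted edge whose range again connects to a cycle, while $\to_1$-transformations applied at other vertices leave $u$ untouched. This observation is already implicit in the proof of Lemma~\ref{cansurv}.

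For part~(i), if $E$ is acyclic then no vertex connects to any cycle, so $S=E^0$, whence $M(E,w)=M(S,w)=Z(M(E,w))$ is a group. Conversely, suppose $M(E,w)$ is a group and, for contradiction, that there exists $v\in E^0\setminus S$. Then $v\in Z(M(E,w))=M(S,w)$, so $v\sim a$ for some $a\in\mathbb F_S$. Confluence produces $c\in\mathbb F_E$ with $v\to c$ and $a\to c$. Transformations starting from $a\in\mathbb F_S$ remain in $\mathbb F_S$ because $S$ is hereditary, while the technical observation above forces $c$ to contain a vertex of $E^0\setminus S$; this is a contradiction. Hence $S=E^0$ and $E$ is acyclic.

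For part~(ii), the text in Section~\ref{weightedgraphssec} records that $M(E,w)$ is conical if and only if $Z(M(E,w))=0$, so by Lemma~\ref{cansurv} it suffices to prove that $M(S,w)=0$ if and only if every non-sink vertex of $S$ has weight~$1$. For the forward direction, suppose every non-sink vertex of $S$ has weight~$1$. Sinks satisfy $s=0$ by the defining relations. For a non-sink $v\in S$, define the height of $v$ as the maximum length of a path in $E$ starting at $v$; this is finite because $v$ reaches no cycle and $E$ is finite, and the base case height~$0$ corresponds to $v$ being a sink. The transformation $v\to\sum_{e\in s^{-1}(v)}r(e)$ is legal since $w(v)=1$, each $r(e)$ lies in $S$ by hereditariness and has strictly smaller height, so by induction each $r(e)=0$, whence $v=0$. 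For the converse direction, suppose $v\in S$ is a non-sink with $w(v)\geq 2$. Then $v$, as the coefficient-$1$ element of $\mathbb F_E$, admits no $\to_1$-transformation, so $v\to c$ forces $c=v$; but $v=0$ in $M(E,w)$ would, by Confluence, give some $c$ with $v\to c$ and $0\to c$, whence $v=c=0$ in $\mathbb F_E$, a contradiction. Thus $v\neq 0$ in $M(S,w)$, so $M(S,w)\neq 0$.

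Overall, the proof is a bookkeeping exercise organised around the identification $Z(M(E,w))=M(S,w)$. The only place requiring real care is the stability observation about $E^0\setminus S$ under transformations, which is the crux of ruling out that a cycle-reachable vertex is equivalent to an element supported entirely on $S$; since the same idea already underlies Lemma~\ref{cansurv}, I do not expect a genuine obstacle.
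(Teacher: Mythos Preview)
Your proof is correct and follows essentially the same approach as the paper's: both parts reduce to the identification $Z(M(E,w))=M(S,w)$ from Lemma~\ref{cansurv}, with the Confluence Lemma~\ref{aralem6} handling the nontrivial directions exactly as the paper does. Your argument for~(i) spells out more explicitly the stability observation (a vertex in $E^0\setminus S$ cannot transform into $\mathbb F_S$) that the paper leaves implicit in its one-line appeal to Lemma~\ref{cansurv}, but otherwise the proofs coincide.
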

\begin{proof}

(i) As noted above for arbitrary monoids,  $M(E,w)$ is a  group  if and only if $M(E,w)=Z(M(E,w))$. The claim follows now from Proposition~\ref{cansurv}. 
%, this is equivalent to $M(E/S,w)=0$.  Since the free monoid associated to the empty set is the trivial monoid, the statement follows. 

(ii) We have that $M(E,w)$ is conical if and only if $Z(M(E,w))=0$, so that by Proposition \ref{cansurv} we get $M(S,w)=0$. Suppose the weight  of each vertex in $S$ is $1$ (this in particular implies the weight of each edge emitting from the vertex is $1$).  Thus the relation given in Display~(\ref{monvertex}) for a non-sink vertex $v\in S$ reduces to $ v= \sum_{e \in s^{-1}(v)} r(e)$, whereas for a sink $s$, we have $s=0$. Thus for $v\in S$, an easy induction (on the maximum length of the paths connecting to sinks) shows that $v=0$ in $M(S,w)$. 
On the other hand if $M(E,w)$ is conical, then $M(S,w)=0$. Suppose $v\in S$. Since $v=0$ in $M(S,w)$, by the Confluence Lemma~\ref{aralem6}, $v\rightarrow 0$. However, no transformation can be applied to $v$ if the weight of $v$ is larger than $1$. 
%(see~(\ref{shtrans})). 
This completes the proof. 
\end{proof}

\begin{remark}\label{conicalremark}  We note that if we consider a finite graph $E$ as a balanced weighted graph, then Corollary~\ref{corconic}(ii) says  that $M(E,w)$ is conical precisely when every  non-sink vertex in $S$ is irrelevant. 
\end{remark}

\begin{remark}\label{ConfluencedoesnotgiveRefinement}
 For those readers who are familiar with the Confluence Lemma of \cite[\S 6.3]{amp}, we note that  although the Confluence Lemma is a key step used in \cite{amp} to establish that $M_E$ is a refinement monoid (for unweighted graphs), and although we have established a corresponding   Confluence Lemma for weighted graphs (Lemma \ref{aralem6}), in general $M(E,w)$ need not be refinement.   
\end{remark}

\section{Unique normal form:  The Diamond Lemma}\label{diamondsec}

In addition to the Confluence Property, one of the main features of sandpile monoids is that their elements stabilise (i.e., are {\it reduction-finite}, see below) to a unique normal form.   In particular, sandpile monoids are necessarily finite.   
% and \red{thus they are finite commutative monoids}.  
 Although any monoid of the form  $M(E,w)$ has been shown to satisfy the  Confluence Property in Lemma \ref{aralem6},  not every monoid of the form $M(E,w)$ has the property that each of its elements stabilises (nor do they need to be finite, as observed previously).   
 %\red{nor they are finite}. 

\begin{example}\label{exinfiniteme}
 Let $E$ be the graph   

\begin{equation*}
\xymatrix{
 \bullet^u  \ar@(lu,ld)\ar@/^1.3pc/[r] \ar@/^0.7pc/[r] & \bullet^v \ar@/^0.9pc/[l] \ar@(ru,rd) 
}
\end{equation*}

\smallskip
\noindent on which we define the vertex weighting $w(u)=w(v)=2$.    
 Then the relations of reduced graph monoids
 % (\ref{monvertex})
  give $2u=u+2v$ and $2v=u+v$ in $M(E,w)$.  One observes that the element $2u$ never  stabilises; specifically, we can continue to apply the two relations to the element $2u$ and produce any element of the monoid of the form $2u + nv$ for $n\in \mathbb{N}^+$.  
 % \bigskip
 %{\bf XXX ; indeed, the element $u+2v=v+2u=2u$ can't be written in a unique form.  \ \ \ \   
% Roozbeh:   I'd prefer to modify or eliminate that  statement;  in some sense every element CAN be written in a unique form, e.g. uniquely of the form $kw$ where $w$ is a vertex and $k = 1,2$. }
 %\red{I remember I spent sometime convincing myself we can't say the elements of this monoid can be written in a unique form. Please let me know if I am confused here. If we say the elements can be written uniquely as $2kw$, yes, in a way makes sense, but then we could use the same argument for any monoid, we collect certain forms of the elements and say elements can be written ``uniquely'' of those forms. However when we say an element is written uniquely, that means we can't reduce it any further via the relations even if we want to. And here if you give me $2u$ I still can reduce it and write it in another form. Or am I circling around myself!? Happy if you want to remove it....}
 %\bigskip
 
     We note further that the monoid $M(E,w)$ is  finite (equal to $\{0, u, v, 2u, 2v\}$), and  conical, and has atoms $u$ and $v$.   But $M(E,w)$ is not a refinement monoid, as an analysis of the equation $u+u = u+2v$ bears out.   
  %monoid which is not refinement with atoms $u$ and $v$ (see Lemma~\ref{atomcancel}). Since $M(E,w)$ has a zero element $u+2v$, its group completion is a trivial group.
  
  \end{example}

 % \red{New example below}
  
\begin{example}\label{sizeofMdependsonweights}   Let $E$ be the graph pictured here.  

\[ \quad {
\def\labelstyle{\displaystyle}
\xymatrix{ {} & \bullet^{u}  \ar@/^{-10pt}/[rd] \ar@/^{-10pt}/ [ld] &  {} \\
\bullet_{v} \ar@(d,l) \ar@/^{-10pt}/[ru] &  & \bullet_{z} \ar@(d,r)
\ar@/^{-10pt}/ [lu] \\
}}
\]
\smallskip

\noindent
We point out that the size of $M(E,w)$ is highly dependent on the weight $w$ assigned to the edges of $E$.   If we assign weight $w_1(e) = 1$ to each edge (i.e., the unweighted case), then $w_1(u)=w_1(v)=w_1(z)=1$, and in this situation $M(E,w_1)$ is infinite.   (Note:  the monoid $M_E$ of this particular unweighted graph has been analyzed as Example $E_1^7$ in \cite[Section 4]{ClassQ}.)   Similarly, if we assign weight $w_2(e) = 2$ to each edge, then $w_2(u)=w_2(v)=w_2(z)=2$, and in this situation $M(E,w_2)$ is infinite as well.  (We will give the tools in Display \ref{Kzeroiso} below which will allow us to easily reach these  conclusions about $M(E,w_1)$ and $M(E,w_2)$.)   However, 
if we assign weight $w_3(e) = 3$ to each edge, then $w_3(u)=w_3(v)=w_3(z)=3$, and in this situation $M(E,w_3)$ is finite, indeed $\big| M(E,w_3) \big| = 27$.   This conclusion will follow from Theorem \ref{conicrit}, as we will see that $M(E,w_3)$ is isomorphic to the sandpile monoid $\SP(G)$ of an appropriate graph $G$.  
%conclude  Assigning vertex weighting $w(u)=w(v)=w(z)=2$,  one can compute that $M(E,w)$ is an infinite monoid, whereas with $w(u)=w(v)=w(z)=3$, $M(E,w)$ is a sandpile monoid with $27$ elements. We note that the graph monoid $M(E)$, i.e., $w(u)=w(v)=w(z)=1$,  is also an infinite monoid (see Example~\ref{exinfiniteme2}). 
\end{example}

We will employ Bergman's Diamond Lemma to re-establish that the elements of  sandpile monoids have unique normal forms.   Although a proof of the uniqueness of normal forms has appeared in the literature (see e.g. \cite[Section 9]{BT}), we include a proof here for two reasons.  First, while previous proofs have made  oblique reference to  a ``Jordan-H\"{o}lder type" ``Diamond Lemma"  result stemming from  Newman \cite{N}, we will show that Bergman's Diamond Lemma can  be applied quite easily and directly here.  Second, with Example \ref{sizeofMdependsonweights} as context, we will play up exactly where in this analysis the specific weight function is utilised.   

   We briefly remind the reader of  the setting (see~\cite{bergman78}). 
Let $\mathbb F_X$ be the free commutative monoid generated by a nonempty set $X$. 
 Let $R$ be a set of pairs of the form $\sigma= (W_\sigma,f_\sigma)$, where $W_\sigma, f_\sigma\in \mathbb F_X\backslash \{0\}$.  The set $R$ is called a {\it reduction system} for $\mathbb F_X$.
For any $\sigma\in R$ and $A \in \mathbb F_X$, let $r_{A+\sigma}:\mathbb F_X \rightarrow \mathbb F_X$ denote a map that sends 
$A+W_\sigma$ to $A+f_\sigma$ and fixes all other elements of $\mathbb F_X$. The maps $r_{A+\sigma}: \mathbb F_X \rightarrow \mathbb F_X$ are called {\it reductions}. For $a,b\in \mathbb F_X$, we write $a\rightarrow b$ if there is a sequence  $r_1, r_2 ,\dots, r_i$ of reductions, such that $r_{i} \circ \cdots \circ r_1(a)=b$.
% (this is the idea in ~(\ref{hfgtrgt6551})). 
An element $a \in \mathbb{F}_X$ is called {\it reduction-finite} if for every infinite sequence $r_1, r_2 ,\dots$ of reductions, $r_i$ acts trivially on $r_{i-1} \circ \cdots \circ r_1(a)$, for all sufficiently large $i$. We  call an element $a\in \mathbb F_X$ {\it reduction-unique} if it is reduction-finite, and if its images under all final sequences of reductions are the same. This unique image is called the \emph{normal form} of the element $a$. 

For $\sigma \neq \tau$ in $R$,   we call a configuration in which    $W_\sigma= A+B$ and $W_\tau = B+C$ (for some  $A, B, C\in \mathbb F_X$)    an {\it overlap ambiguity} of  $R$.
 Such an overlap ambiguity  is called  {\it resolvable} if there exist compositions of reductions, $r$ and $r'$, such that $r(f_\sigma + C) = r'(A+ f_\tau)$. 
 Similarly, for  
 $\sigma\neq\tau$ in $R$, we call a configuration in which $W_\sigma = B$ and $W_\tau = A+B+C$   (for some  $A,B,C\in \mathbb F_X$)  an {\it inclusion ambiguity}.  Such an inclusion ambiguity is  called {\it resolvable}  if there exist compositions of reductions, $r$ and $r'$, such that $r(A+f_\sigma +C) = r'(f_\tau)$.  
 
  In order to use Bergman's Diamond Lemma result, it is necessary that $\mathbb F_X$ be  equipped with a {\it semigroup partial ordering} 
 $\leq$, i.e.,  if $x < x'$ then $x+y < x' + y$, for all $x, x', y \in \mathbb{F}_X$.  Furthermore, we need the partial ordering $\leq$ to be {\it compatible} with  $R$, namely, that  $f_\sigma <  W_\sigma$ for all $\sigma \in R$. 

%\red{Is the following sentence meant to be in bold?}

{\bf Bergman's Diamond Lemma  in the setting of commutative monoids~\cite[Theorem~1.2, \S 9.1, \S10.3]{bergman78}}  says the following.     Suppose   $\leq$ is a semigroup partial ordering on $\mathbb F_X$ compatible
with the reduction system $R$, and that $\leq$ has  the descending chain condition.  Then  all ambiguities of $R$ are resolvable if and only if all elements of $\mathbb F_X$ are reduction-unique under $R$.

The main obstacle in trying to utilise  Bergman's machinery in practice  lies in establishing  that the ambiguities are resolvable. We are in  position to place sandpile monoids in the setting of Bergman's Diamond Lemma. We will see that neither type of  ambiguity arises in the context of sandpile relations, and thus once the partial order on the monoid is established, the fact that these monoids have reduction-unique forms will follow quite easily.   

\begin{proposition} \label{ReductionUnique}
Suppose $E$ is a sandpile graph.  Then every element of the sandpile monoid $\SP(E)$ is reduction-unique.
\end{proposition}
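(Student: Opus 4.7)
The plan is to apply Bergman's Diamond Lemma, as recalled above, to the reduction system $R$ on $\mathbb{F}_E$ whose rules are the sink reduction $\sigma_s = (s, 0)$ together with
\[
\sigma_v \ = \ \Big(w(v)\, v,\ \sum_{e\in s^{-1}(v)} r(e)\Big)
\]
for each non-sink vertex $v$, where $w(v) = |s^{-1}(v)|$ (the balanced weighting giving $\SP(E) = M(E,w)$, by Remark~\ref{sandpileex}). Since $\mathbb{F}_E/R$ is, by construction, exactly $\SP(E)$, Bergman's theorem reduces the proposition to verifying (i) the existence of a semigroup partial order on $\mathbb{F}_E$, compatible with $R$ and with the descending chain condition, and (ii) the resolvability of every overlap/inclusion ambiguity of $R$.

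The second requirement is vacuous. Each left-hand side $W_\sigma$ in $R$ is supported on a single vertex of $E$, namely of the form $w(v)\cdot v$ or $1\cdot s$, and distinct rules involve distinct vertices. An overlap ambiguity would therefore require a nonempty common ``middle'' $B$ simultaneously a multiple of two different vertices, which is impossible in the free commutative monoid on $E^0$; an inclusion ambiguity would force one rule's left-hand side to be a sub-multiset of another's, likewise forcing the two vertices to coincide and the rules to agree. So no nontrivial ambiguities exist.

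The main work is therefore to construct the partial order in (i), and this is where the sandpile-graph hypothesis is essential and where I expect the main obstacle to lie. The plan is to build a strictly positive integer ``potential'' $\phi : E^0 \to \mathbb{N}_{>0}$ satisfying
\[
w(v)\,\phi(v) \ > \ \sum_{e\in s^{-1}(v)} \phi(r(e))
\]
for every non-sink vertex $v$. Such a $\phi$ exists because the reduced Laplacian $\widetilde L$ of $E$ (the out-degree-minus-adjacency matrix on the non-sink vertices) is invertible with entrywise non-negative inverse: the sandpile assumption that every vertex admits a path to the sink is precisely what makes $\widetilde L$ nonsingular and an M-matrix, so each row of $\widetilde L^{-1}$ is non-negative and nonzero by invertibility, hence contains a positive entry. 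Setting $\phi(s) = 1$ and $\phi_{\mathrm{ns}} := \widetilde L^{-1}(\mathbf{1} + \mathbf{b})$, with $b_v$ counting the edges from $v$ directly to $s$, yields a strictly positive rational $\phi$ for which $w(v)\phi(v) - \sum_{e\in s^{-1}(v)} \phi(r(e)) = 1$ for every non-sink $v$, and clearing denominators gives the desired integer-valued potential. The additive extension $\|a\|_\phi := \sum_v a(v)\phi(v)$ is then a monoid homomorphism $\mathbb{F}_E \to \mathbb{N}$ that strictly decreases under every reduction in $R$, since both $\|f_{\sigma_s}\|_\phi = 0 < \phi(s) = \|W_{\sigma_s}\|_\phi$ and, for each non-sink $v$, $\|f_{\sigma_v}\|_\phi < \|W_{\sigma_v}\|_\phi$. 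Taking $a \preceq b$ if $a = b$ or $\|a\|_\phi < \|b\|_\phi$ therefore produces a semigroup partial order on $\mathbb{F}_E$ compatible with $R$, whose $\mathbb{N}$-valued weight secures the descending chain condition. Bergman's Diamond Lemma now delivers reduction-uniqueness of every element of $\mathbb{F}_E$, and hence of every element of $\SP(E)$.
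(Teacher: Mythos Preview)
Your argument is correct and follows the same overall architecture as the paper's: apply Bergman's Diamond Lemma to the toppling reduction system, observe that distinct rules live over distinct vertices so there are no ambiguities, and supply a compatible semigroup partial order with DCC.

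The genuine difference is in how the compatible order is manufactured. The paper builds an elementary, self-contained potential $p(v)=D^{\,n-\ell_v}$ from the shortest-path distance $\ell_v$ to the sink (with $D=\max_v w(v)$). This $p$ \emph{increases} under each toppling; the paper then takes the order to be the reflexive-transitive closure of $\rightarrow_1$ itself, uses $p$ only to verify antisymmetry, and obtains DCC from the fact that total mass is conserved under balanced toppling, so $p$ stays bounded by $(\sum_v k_v)D^n$ along any reduction sequence. The sink relation is handled separately at the end. Your construction instead solves $\widetilde L\,\phi_{\mathrm{ns}}=\mathbf 1+\mathbf b$ via the M-matrix property of the reduced Laplacian, producing a strictly positive potential that \emph{decreases} under every rule (including the sink rule), so DCC is immediate from $\mathbb N$-valuedness. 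Your route is arguably cleaner---the sink is treated uniformly, and no separate mass-conservation step is needed---but it imports a nontrivial external fact (nonsingularity and nonnegativity of $\widetilde L^{-1}$ for a graph in which every vertex reaches the sink), whereas the paper's distance-based potential is entirely explicit and needs nothing beyond the definitions. Either way the conclusion is the same.
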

\begin{proof}
Let $E$ be a sandpile graph with the unique sink $s$ and consider $E$ as a balanced weighted graph by assigning $w(v)=|s^{-1}(v)|$ for all $v\in E^0 \setminus \{s\}$.  
% (see Definition~\ref{sanddefmon}). 

Let $\mathbb{F}_E$ denote the free commutative monoid on the set $E^0$.   Let $R$ be the set of pairs of the form $\sigma= (W_v,f_v)$, 
 where 
 \begin{equation}\label{typreh}
 W_v=w(v) v,  \, \, \text{ and } \, \, f_v=\sum_{e\in s^{-1}(v)}r(e),
 \end{equation}
  for every non-sink vertex  $v$.

We employ here an idea quite similar to one presented in \cite{BT}.     Let $D=\max \{w(v) \ | \ v\in E^0\}$.  (Set $D=2$ in case this maximum value is $1$; i.e.,  in case each non-sink vertex in $E$ is irrelevant.) 
   Let $n$ denote $|E^0|$, and let $\ell_v$ denote the length of the shortest path which connects $v$ to the sink $s$.  Note that $\ell_v < n$ for all $v\in E^0$, as no shortest length path can contain a cycle.  
  The assignment  $v\mapsto D^{n-\ell_v}$, for any $v\in E^0$, induces a homomorphism of monoids $p:\mathbb F_E \rightarrow \mathbb N$, where   
  for $x = \sum_{v\in E^0} k_vv  \in \mathbb{F}_E$, we define
 \[p(x): = \sum_{v\in E^0} k_v D^{n-\ell_{v}}.\]

   %Gene comment:  what does    %Furthermore    mean here?  
  
   We check that $p(W_\sigma) < p(f_\sigma)$ for the relations defined in  (\ref{typreh}).   By definition we have  $p(W_\sigma)=w(v)D^{n-\ell_v}$ and 
  $p(f_\sigma)= \sum_{e\in s^{-1}(v)}  D^{n-\ell_{r(e)}}$.   
  
%  \red{I added the supporting argument here and re-organised the arguments here a bit}
  
   It is here that we use the assumption that $w(v) = |s^{-1}(v)| $.      Since $w(v)\leq D$ and  $\ell_{r(f)}=\ell(v)-1$  for some $f\in s^{-1}(v)$, we obtain 
% \begin{multline}\label{uneqberg}
 \begin{eqnarray*}
 p(W_\sigma) =  w(v)D^{n-\ell_v}  & =  & |s^{-1}(v)| D^{n-\ell_v}   \\
  & <  &  D \cdot  D^{n-\ell_{v}}+ \sum_{e\in s^{-1}(v)\backslash \{f \}} D^{n-\ell_{r(e)}}  \\
  & = &  \sum_{e\in s^{-1}(v)}  D^{n-\ell_{r(e)}}  \\ 
  & =  & p(f_\sigma).
 %\end{multline}
  \end{eqnarray*}

\noindent
 We now define an order $\geq$ on $\mathbb{F}_E$ as follows:
 \[ x \geq y \ \ \  \mbox{ if } \  \ \  x=y \ \ \mbox{or} \ \ x  \rightarrow y,\]
where the binary relation $\rightarrow$ is defined as in Displays (\ref{hfgtrgt655}) and (\ref{hfgtrgt6551}). The  displayed inequality above %formula~(\ref{uneqberg}) 
yields that  $x>y$ implies $p(x) <p(y)$,  and so  $\geq$ is a partial order on $\mathbb{F}_E$.  By the definition of the ordering,  it is immediate that  $\geq$ is a semigroup partial order which is compatible with relations (\ref{typreh}).  
% by the definition of ordering.
 The fortunate, obvious observation in this setting is that for $v \not= v' \in E^0$, the expressions  $W_v = w(v)v$ and $W_{v'} = w(v')v'$ in $\mathbb{F}_E$   involve neither overlap ambiguities nor inclusion ambiguities.  

Finally, we have $p(x) \leq  (\sum_{v\in E^0}  {k_v)}D^n$, for all $x =   \sum_{v\in E^0}  k_v v \in \mathbb{F}_E$.  This  guarantees that  the  partial ordering on $\mathbb F_E$ defined above has the  descending chain condition.

Thus Bergman's Diamond Lemma guarantees that the monoid generated by $\mathbb F_E$ subject to relations (\ref{typreh}) is reduction-unique. Since 
$\SP(E)$ is this monoid by identifying the sink to be zero, $\SP(E)$ is  reduction-unique, as desired.  
 \end{proof}

%\bigskip

%{\bf Roozbeh:  Here are many comments. 

% (1) is that last statement clear?  Obviously  $p(W_s) < p(f_s)$. statement does not hold for $v = s$.   Does the Diamond Lemma not allow for the $f_\sigma$ terms to be $0$? I think we need to say more about that last sentence.   
 
 % (2)  Maybe we can resolve this issue in question (1) by defining a new graph $F$ which adds to $E$ a new sink $s'$ and one edge from $s$ to $s'$.  Then we can argue that every element in $\SP(E)$ has a normal form (and then the old sink $s$ can be included), but then we use that $\SP(E) = \SP(E')$ because the new edge is irrelevant.  Or something like that. 
  
 % (3)  In the old Introduction you've claimed that we establish a unique normal form in some monoids more general than sandpile monoids.  But the proof you give is only for sandpile monoids.
  
%{\bf XXX   Roozbeh:     Does the Bergman process explicitly tell us what the normal form  is?  } 
  
%  (5)  Maybe we should just include the Diamond Lemma stuff in an Appendix?}

\begin{remark}\label{sizeofSP(E)} We note that the sandpile monoid $\SP(E)$ of a  sandpile graph $E$ must be finite.   Indeed $| \SP(E) | $ is the product of the positive integers $|s^{-1}(v_i)|$ taken over all non-sink vertices $v_i$ of $E$.   This is clear because each element of $\SP(E)$  corresponds to the (unique) reduced form of each of the elements in the quotient description of $\SP(E)$   as guaranteed by  Proposition \ref{ReductionUnique}. 

%\blue{This part of remark can go after we prove $\SP(E)$ has a unique normal form, then it follows from our theorem, because by Bergman, elements $k_i u_i$ for $k_i<w(u_i)$ can't be further reduced}
 \end{remark}

\section{Weighted Leavitt path algebras}\label{wLpasection}

In this section we briefly recall the notion of weighted Leavitt path algebras. These are algebras associated to weighted graphs (see \S\ref{weightedgraphssec}). We refer the reader to \cite{H} and \cite{Pre} for a detailed analysis of these algebras. 
\begin{deff}\label{weighteddef}
Let $(E,w)$ be a weighted graph and $\K$ a field.  The free $\K$-algebra generated by $\{v,e_i,e_i^*\mid v\in E^0, e\in E^1, 1\leq i\leq w(e)\}$ subject to relations
\begin{enumerate}[(i)]
\item $uv=\delta_{uv}u,  \text{ where } u,v\in E^0$,
\medskip
\item $s(e)e_i=e_i=e_ir(e),~r(e)e_i^*=e_i^*=e_i^*s(e),  \text{ where } e\in E^1, 1\leq i\leq w(e)$,
\medskip
\item 
$\sum_{e\in s^{-1}(v)}e_ie_j^*= \delta_{ij}v, \text{ where } v\in E_{\reg}^0 \text{ and } 1\leq i, j\leq w(v)$, 
\medskip 
\item $\sum_{1\leq i\leq w(v)}e_i^*f_i= \delta_{ef}r(e), \text{ where } v\in E_{\reg}^0 \text{ and } e,f\in s^{-1}(v)$,
\end{enumerate}
is called the {\it weighted Leavitt path algebra} of $(E,w)$, and denoted $L_\K(E,w)$. In relations (iii) and (iv) we set $e_i$ and $e_i^*$ to be zero whenever $i > w(e)$.    
\end{deff}

Note that if the weight of each edge is $1$, then $L_\K(E,w)$ reduces to the usual Leavitt path algebra $L_\K(E)$  of the graph $E$.   Also, if $E$ is the empty graph then $L_\K(E,w)$ is defined to be the zero ring.  

\begin{example}\label{exweighted}

%\bigskip

%{\bf XXX   Roozbeh:    I need to change this from $n+k$ loops of weight $n$ to $n$ loops of weight $n+k$.    I'll also put in that these two are isomorphic, and refer to Example 5.5 in \cite{H}.}

%\red{I agree... sounds good.}

%\bigskip
 
Weighted Leavitt path algebras were originally  conceived in \cite{H}  in order to provide a context in which to generalise the algebras $L_\K(n,n+k)$ constructed by W. Leavitt
%~\cite{vitt62} 
mentioned in the Introduction.  
%~\ref{introkai}.  
% Specifically, 
%let $A$ be the unital free $\K$-algebra generated by $\{x_{sr},y_{rs}\mid 1\leq r\leq n, 1\leq s\leq n+k\}$ subject to the relations
%\begin{equation}\label{equbi}
%\sum_{j=1}^nx_{pj}y_{jq}=\delta_{pq}~(1\leq p,q\leq n+k)~\text{  and  }~\sum_{i=1}^{n+k}y_{pi}x_{iq}=\delta_{pq}~(1\leq p,q\leq n).\end{equation}
%Let $X\in \M_{(n+k)\times n}(A)$ be the matrix whose entry at position $(i,j)$ is $x_{ij}$, and $Y\in\M_{n \times (n+k)}(A)$ the matrix whose entry at position $(j,i)$ is $y_{ji}$. It follows from the relations (\ref{equbi}) that $Y\cdot X=I_{n,n}$ and  $X\cdot Y=I_{n+k,n+k}$. It therefore follows that $A^n\cong A^{n+k}$ as right $A$-modules. Leavitt proved that in fact $A$  is of type $(n,n+k)$. (Recall that a ring $B$ is of type $(n,n+k)$ if $n$ and $n+k$ are the least positive integers such that $B^n\cong B^{n+k}$ as $B$-modules.) If $n=1$ then this algebra is simple, and not a domain;  while if $n\geq 2$, then this algebra is not simple, and is a domain.    The $\K$-algebra $A$ constructed above is called the {\it Leavitt algebra of type $(n,n+k)$} and is denoted by $L_\K(n,n+k)$. 
Specifically, let $E_{n,n+k}$ denote the  weighted graph consisting of one vertex $v$ and $n$ loops of weight $n+k$ at $v$.   Then it is shown in \cite[Example 5.5]{H} that 
$$L_\K(E_{n,n+k}) \cong L_\K(n,n+k).$$
%as follows.  
%Let $E^1=\{y_1,\dots,y_{n+k}\}$ with $w(y_i)=n$, $1\leq i\leq n+k$. Denote the $n$ generating symbols corresponding to the edge $y_i\in E^{1}$ by $\{y_{1i},\dots,y_{ni}\}$. We visualise this data as follows: 
%\begin{equation*}
%\xymatrix{
%& \bullet \ar@{.}@(l,d) \ar@(ur,dr)^{y_{11},\dots,y_{n1}} \ar@(r,d)^{y_{12},\dots,y_{n2}} \ar@(dr,dl)^{y_{13},\dots,y_{n3}} \ar@(l,u)^{y_{1,n+k},\dots,y_{n,n+k}}& 
%}
%\end{equation*}
%(Note: we have previously encountered this weighted graph in Example \ref{weightedgraphmonoidexample}.)   Set $x_{sr}=y_{rs}^*$, for $1\leq r\leq n$ and $1\leq s\leq n+k$ and arrange the $y$'s and $x$'s in the matrices
%\begin{equation*} 
%Y=\left( 
%\begin{matrix} 
%y_{11} & y_{12} & \dots & y_{1,n+k}\\ 
%y_{21} & y_{22} & \dots & y_{2,n+k}\\ 
%\vdots & \vdots & \ddots & \vdots\\ 
%y_{n1} & y_{n2} & \dots & y_{n,n+k} 
%\end{matrix} 
%\right), \qquad 
%X=\left( 
%\begin{matrix} 
%x_{11\phantom{n+{},}} & x_{12\phantom{n+{},}} & \dots & x_{1n\phantom{n+{},}}\\ 
%x_{21\phantom{n+{},}} & x_{22\phantom{n+{},}} & \dots & x_{2n\phantom{n+{},}}\\ 
%\vdots & \vdots & \ddots & \vdots\\ 
%x_{n+k,1} & x_{n+k,2} & \dots & x_{n+k,n} 
%\end{matrix} 
%\right) .
%\end{equation*} 
%Then Condition~(iii) of Definition~\ref{weighteddef} precisely says that $Y\cdot X=I_{n,n}$ and Condition~(iv) is equivalent to 
%$X\cdot Y=I_{n+k,n+k}$, which are the generators and relations of $L_\K(n,n+k)$. 

\noindent
In particular, let $(E,w)$ be the weighted graph consisting of one vertex and one loop of weight $n$. 
%Comparing the relations, one can see 
Then the weighted Leavitt path algebra of $(E,w)$ is isomorphic to the Leavitt path algebra of a graph with one vertex and $n$ loops, which in turn is the Leavitt algebra $L_\K(1,n)$. 
% (\cite[Definition 1.1.3]{TheBook}).
%\[\xymatrix{
%L_\K\big(\!\!\!\!\!\!\!\!\!\!\!\!\!& \bullet^v \ar@(ul,ur)^{e_{1},\dots,e_{n}}
%},w(v)=n \big)\cong
%\xymatrix{
%L_\K\big(\!\!\!&   \bullet^v \ar@{.}@(l,d) \ar@(ur,dr)^{e_{1}} \ar@(r,d)^{e_{2}} \ar@(dr,dl)^{e_{3}} 
%\ar@(l,u)^{e_{n}}& 
%}, w(v)=1 \big).\]
\end{example}

\begin{remark}  \label{remarkongrading}
(1)  Let $n,k\in \mathbb{N}^+$, and let $(E,w)$ denote the weighted graph $E_{n,n+k}$ described in Example \ref{exweighted}.  So $(E,w)$ has one vertex $v$, and $n$ loops at $v$ each having weight $n+k$.   Let $(E',w')$ denote the weighted graph  $E_{n+k,n}$.  So $(E',w')$ has  one vertex $v'$, and $n+k$ loops at $v'$ each having weight $n$.   Because of the symmetry involved in the definition of weighted Leavitt path algebras, it is easy to show that $L_\K(E,w) \cong L_\K(E',w')$ as $\K$-algebras.   Our choice here to focus on the $n$ loops of weight $n+k$ point of view is to make the connection we will establish in Theorem \ref{conicrit} below between sandpile monoids and weighted Leavitt path algebras more transparent.   We note, however, that the algebras  $L_\K(E,w)$ and $L_\K(E',w')$ are different if these are viewed as  $\mathbb{Z}$-graded  $\K$-algebras in the standard Leavitt path algebra  $\mathbb{Z}$-grading

(2) The previous paragraph notwithstanding, we caution the reader that in general one cannot cavalierly modify the edges and weights and expect that the corresponding Leavitt path algebras will be isomorphic.   For instance, let $(F,w)$ be the graph with one vertex $v$, and two loops at $v$ each having weight $2$;  and let $(F',w')$ be the graph with one vertex $v'$ and four loops at $v$ each having weight $1$.   Then $L_\K(F,w) \not\cong L_\K(F',w')$.  (Indeed, $M(F,w) \not\cong M(F',w')$ as monoids, since  the latter is finite,
%  (as noted in Example \ref{Mplusexample}), 
while the former is infinite.  Since these two monoids are not isomorphic, neither can the two weighted Leavitt path algebras be isomorphic; see 
%the $\mathcal{V}$-monoid Realization Theorem 
Theorem \ref{kspcts} below.)    
\end{remark}  

%\end{comment}

%We continue by describing the non-stable $K$-theory of weighted Leavitt path algebras.
%\label{nonstabmon}
For a ring $A$ with identity, the commutative monoid $\V(A)$ is defined as the set of isomorphism classes of finitely generated projective right $A$-modules equipped with  direct sum as the binary operation. The group completion of this monoid is the Grothendieck group $K_0(A)$ of $A$. We refer the reader to the book of Magurn for a comprehensive treatment of 
these ideas 
%lower $K$-theory of rings
~\cite{magurn}. 

The Leavitt algebras $L_\K(n,n+k)$ can be produced from Bergman's machinery of the Universal Construction of Rings~\cite{B}, which subsequently thereby also describes the structure of the $\mathcal{V}$-monoids  of these algebras. Let $\K$ be a field, $A$ a $\K$-algebra and let $P$ and $Q$ be nonzero finitely generated projective right $A$-modules. Then there is a $\K$-algebra $B$, with an algebra homomorphism $A\rightarrow B$ such that there is a universal isomorphism $i:\overline P \rightarrow \overline Q$, where $\overline M=M\otimes_A B$ for any right $A$-module $M$ (\cite[p. 38 and Theorem~3.3]{B}).  Bergman's Theorem  \cite[Theorem 5.2]{B} states that $\V(B)$ is the quotient of $\V(A)$ modulo the  relation $[P]=[Q]$.  

Using this, starting with a field $A=\K$, and the finitely generated projective $A$-modules $P=\K^n$ and $Q=\K^{n+k}$, Bergman's machinery applied to this data gives that  $B=L_\K(n,n+k)$, and subsequently that 
$$\mathcal{V}(L_\K(n,n+k))\cong M_{n,n+k} =  \{0, x, 2x, \dots, (n+k-1)x \ | \ (n+k) x = nx \}.$$  
%analyzed previously in Example  \ref{Mplusexample}.  
 We note that the process of explicitly identifying the $\mathcal{V}$-monoid $\mathcal{V}(B)$  is extremely delicate, requiring the full use of the powerful tools of Bergman developed in \cite{B}.  Indeed, while Leavitt had established in \cite{vitt62} that the monoid of {\it free} modules over $L_\K(n,n+k)$ is isomorphic to $M_{n,n+k}$, it was not established until Bergman's fundamental work fifteen years later in \cite{B}  that  $M_{n,n+k}$  actually represents the full $\mathcal{V}$-monoid of $L_\K(n,n+k)$.

\begin{remark}
It is germane to point out that the original observation which led us to connect  sandpile monoids to the $\mathcal{V}$-monoids of weighted Leavitt path algebras was made in the context of the monoid $M_{n,n+k}$.  Specifically, $M_{n,n+k}$ arises on the one hand  (as noted in Example \ref{exweighted} together with the previous paragraph) as the $\mathcal{V}$-monoid of the most elementary type of vertex weighted graph, namely  the vertex weighted graph having exactly one vertex.  On the other hand (as noted in Example \ref{Mnn+kissandpile}), $M_{n,n+k}$ also arises as the sandpile monoid of the most elementary type of sandpile graph, namely  the sandpile graph having exactly one non-sink vertex.    Once this connection was realised, the roadmap to connect general sandpile monoids and weighted Leavitt path algebras became evident.   
%   , as it arises naturally on the one hand as the $\mathcal{V}$-monoid of $L_\K(n,n+k)$, and also as the sandpile monoid On the one hand, this monoid arises as the  
\end{remark}

Although the  computation of the $\mathcal{V}$-monoid  can be carried out for the Leavitt path algebra of arbitrary (finite) weighted graphs (see e.g. \cite{P}), for the current work we need only describe the $\mathcal{V}$-monoid of weighted Leavitt path algebras associated to vertex weighted graphs, which we now do.  

\begin{deff}\label{MsubEwdef}
Let $(E,w)$ be a row-finite vertex weighted graph. 
We define the {\it graph monoid} of $(E,w)$ to be 
\begin{equation}\label{monvertexnosink}
M_{(E,w)} \  \ := \ \ \mathbb F_E \  \Big / \ \Big \langle w(v) v= \sum_{e \in s^{-1}(v)} r(e) \, \big | \, v\in E_{\reg}^0\Big \rangle.
\end{equation}
\end{deff}
\begin{remark}\label{M(E,w)equalMsub(E,w)}
While clearly similar one to the other, the definitions of the monoids $M(E,w)$ (Definition~\ref{genmon}) and $M_{(E,w)}$ are not identical.  Specifically, there is no relation in $M_{(E,w)}$ whose left hand side involves any sinks which may exist in $E$; while in $M(E,w)$, there is a relation of the form $1\cdot s=0$ for each sink in $E$.    Formally,
 \[M(E,w)  \ \cong \ M_{(E,w)} \big /   \langle s=0  \ \big | \, s\in E_{\ssink}^0 \rangle. \]
 So if $E$ contains no sinks, then $M(E,w) = M_{(E,w)}$.  
\end{remark}

In particular, for a directed (unweighted) row-finite  graph $E$  we have the  \emph{graph monoid}
\begin{equation}\label{weightonemon}
 M_E\cong \mathbb F_E \Big / \Big \langle  v= \sum_{e \in s^{-1}(v)} r(e) \, \big | \, v\in E_{\reg}^0\Big \rangle.
\end{equation}
originally defined in \cite{amp}.    See also \cite[Definition 1.4.2]{TheBook}.  
%\red{I have added some lines below}

Recall that for a commutative monoid $M$, an element $\mathbf{1} \in M$ is called a \emph{distinguished} element if for any element $m\in M$, there exist $k \in \mathbb N$ and $m'\in M$ such that $m+m'=k\mathbf{1}$.

In particular, let $(E,w)$ be  a finite vertex weighted graph, and let $M_{(E,w)}$ the monoid  described in Definition \ref{MsubEwdef}.    Then for each choice of integers $0< k_v <w(v)$ (for each non-sink vertex $v$ of $E$), the element  $\sum_{v\in E^0}  k_v v$ of $M_{(E,w)}$ is easily seen to be  a distinguished element of $M_{(E,w)}$. 

In \cite{amp}, Ara, Moreno, and Pardo built Leavitt path algebras $L_\K(E)$  via the  machinery presented by Bergman in \cite{B}, 
They showed that $L_\K(E)$ can be obtained by Bergman's universal construction of  rings, 
which thereby allowed them to conclude in particular that $\mathcal V(L_\K(E))\cong M_E$ (\cite[Theorem 3.5]{amp}).  Similar arguments were used  in~\cite{H,P,Pre} to construct weighted Leavitt path algebras, and thereby realise their corresponding monoids, via this same machinery.  Here    we describe  in detail   how to  produce weighted Leavitt path algebras  via the Bergman machinery, in the situation where one starts with a monoid of the form $M_{(E,w)}$.    Additionally, we will  show how  to modify the resulting algebra in such a way that the regular module of the algebra corresponds to whatever distinguished element of $M_{(E,w)}$ one might choose.  
% to , with an added data of the distinguished element in the monoid. 

%\red{The following theorem added: The realisation of weighted Lpa as Bergman algebras with any given distinguished element} 

\begin{theorem}\label{kspcts} 
Let $\K$ be a field, $(E,w)$ a finite vertex weighted graph, and $M_{(E,w)}$ the monoid  described in Definition \ref{MsubEwdef}.   For each choice of integers $0< k_v <w(v)$ (for each non-sink vertex $v$ of $E$),
 let $\mathbf{1}$ denote the distinguished element $\sum_{v\in E^0}  k_v v$ of $M_{(E,w)}$.  Consider the vertex weighted graph $(F,w)$ obtained from $E$ by adjoining a path of length $k_v-1$ to each vertex $v$ in $E$, and assign weight $1$ to  each newly adjoined vertex.   
   Then there is a natural monoid isomorphism 
$$\V(L_\K(F,w))\cong M_{(E,w)},$$
 via which
 $  [L_\K(F,w)]  \ \mbox{corresponds  to} \  \mathbf{1}.$
 In particular,  for any finite vertex weighted graph $(E,w)$,   there is a natural isomorphism 
$$\mathcal V(L_\K(E,w)) \cong  M_{(E,w)},$$
via which 
$ [L_\K(E,w)]  \ \mbox{corresponds  to} \  \sum_{v\in E^0}v. $
%(Moreover, in this situation we have $vL_\K(E,w)$ is the finitely generated projective right $L_\K(E,w)$-module generated by $v$.   
\end{theorem}

\begin{proof}

Since $F$ is an enlarged version of the graph $E$, we can name the corresponding vertices in $F$ by $v^0$ in case $v\in E^0$, and name the vertices on the path of length $k_v-1$ connected to the vertex $v^0$ by $v^i$, $1\leq i \leq k_v-1$.     (For any vertex $v$ having $k_v = 1$ we do not adjoin any new edges to $v$, but we do relabel $v$ as $v^0$.)   

%\bigskip

%{\bf XXX Roozbeh:  I don't understand the following comment:   

%(Thus if $k_v=1$, there only is one copy of $v^0$ in $F$.)   

%}

%{\color{red} Here I am trying to say if $k_v$ is $1$, then we don't add any path to the vertex $v^0$, so if all the $k_v=1$ for all $v$, we are not changing the graph and $F$ is the same as $E$. So if $k_v=1$, the the set $1\leq i \leq k_v-1$ is empty and thus no $v^i$ appears.... If this does not make sense we could just explain it in words?}

 It is easy to show that 
$$
\phi: M_{(F,w)} \longrightarrow M_{(E,w)} \ \ \mbox{via} \ \ 
v^i \longmapsto v
$$
induces an isomorphism of monoids with $\phi(\sum_{v\in F^0} v)=\mathbf{1}$. 

Next we use Bergman's universal construction machinery of~\cite{B} to show that $M_{(F,w)}$ can be realised as the monoid $\V(L_\K(F,w))$ with $\sum_{v\in F^0} v\in M_{(F,w)}$ corresponding to $[L_\K(F,w)]\in \V(L_\K(F,w))$.  Together with the just-noted monoid isomorphism $\phi$, this will give the desired result.  

%Define a map $\psi:F^0\rightarrow \V(L_\K(F,w))$ by $\psi(v)=[v]$ and extend this to the map from the free monoid on $F^0$ to $\V(L_\K(F,w))$. We check that this  map induces a homomorphism 
%\begin{align}\label{freshto}
%\psi_F:M_{(F,w)}&\longrightarrow \V(L_\K(F,w)),\\
%v& \longmapsto [v]. \notag
%\end{align}
%To see this, we need to show that if $v$ emits edges in $F$, then $w(v)v$ and $\sum_{\{e\in F^1 \mid s(e)=v \}} r(e)$ map to the same element in $\V(L_\K(F,w))$. Let $\{e_1,\dots,e_s\}$ be all the  edges which are emitted from $v$. Then $n:=w(v)=w(e_i)$, $1\leq i\leq s$, as $F$ is vertex weighted graph.  Consider the matrices $Y=(e_{ij})_{1\leq j \leq s, 1\leq i\leq n}$, where $e_{ij}={(e_j)}_i$ and $X=(Y^*)^t$, where ${}^t$ is the transpose operation (thus $X$ is the transpose conjugate of $Y$). Then the conditions of Definition~\ref{weighteddef} guarantee  that $Y.X=w(v)[v]$ and $X.Y=\sum_{\{e\in F \mid s(e)=v \}} [r(e)]$. So $\psi_F$ of (\ref{freshto}) is well-defined.  
%We need to show $\psi_F$ is an isomorphism of monoids. 
%\bigskip 

Let $A_0=\prod_{v\in F^0}\K$. Thus the monoid $\mathcal V(A_0) \cong \bigoplus_{v\in F^0} \mathbb N$  is the free commutative monoid generated by vertices of the graph $F$, with $[A_0]=\sum_{v\in F^0} v \mapsto  (1,1,\dots,1)$. We will use Bergman's Theorems 3.1 and 3.2 of \cite{B} to construct a universal ring $A$ from $A_0$ which identifies certain projective modules of $A_0$. Then Theorem 5.2 of \cite{B} guarantees $\mathcal V(A)$ is isomorphic to $\mathcal V(A_0)$ modulo the identifications of these projective modules. The projective modules over $A_0$ will be  chosen to give us precisely the relations of Definition \ref{MsubEwdef}, so that  $\mathcal V(A)\cong M_{(F,w)}$. On the other hand we will observe that the ring $A$ is precisely $L_\K(F,w)$. 

Let $\{v_1,\dots,v_m\}$ be the set of all the vertices in $F$ which emit edges.  Consider the following finitely generated projective $A_0$-modules: \  $P=\textstyle{\bigoplus_{w(v_1)}}(A_0v_1)$,  and $Q=\textstyle{\bigoplus_{\{e \in F^1 \mid s(e)=v_1 \}}}A_0r(e)$.  (We are identifying  $v_i$ with the $i$-th unit vector in $A_0=\prod_{v\in F^0}\K$.)   Using  Bergman's  \cite[Theorem 3.1]{B}, one obtains a ring $A_0'$ and universal homomorphisms $i:A_0' \otimes _{A_0} P\rightarrow A_0' \otimes _{A_0} Q$ and $\overline i: A_0' \otimes _{A_0} Q\rightarrow A_0' \otimes _{A_0} P$. Next, Bergman's \cite[Theorem 3.2]{B} applies to obtain the ring $A_1$, where extensions of $i$ and $\overline i$ (call them again by $i$ and $\overline i$) over $A_1$ gives $1-i\overline i=0$ and $1-\overline i i=0$. Thus we have  $A_1=A_0 \langle i,i^{-1}: \overline P \cong \overline Q\rangle$ with a universal isomorphism \[i: \  \overline P:=A_1\otimes_{A_0} P \ \  \rightarrow  \ \ \overline  Q:=A_1\otimes_{A_0} Q.\] 

Continuing to follow the Bergman construction (specifically, the proofs of Theorem 3.1 and 3.2 in \cite{B})  shows that $A_1$ is $L_\K(X_1,w)$, where $X_1$ is a graph with the same vertices as $F$ and where $v_1$ emits the same edges as in $F$ and other vertices do not emit any edges. Namely, if $\{ e_1,\dots,e_s\}$ is all the edges which are emitted from $v_1$ with $n=w(v_1)$ then the right multiplication by  the matrix  $Y=(e_{ij})_{1\leq j \leq s, 1\leq i\leq n}$, where $e_{ij}={(e_j)}_i$, gives the map 
\[i: \ \overline P=w(v_1)(A_1 v_1) \ \  \rightarrow \ \  \overline Q =\textstyle{\bigoplus_{\{e \in F^1 \mid s(e)=v_1 \}}} A_1r(e),\] while $X=(Y^*)^t$, where ${}^t$ is the transpose operation,  gives $i^{-1}$. Now \cite[Theorem~5.2]{B} guarantees that $\V(A_1)$ is obtained from $\V(A_0)$ by adding the relation $[P]=[Q]$.  Translating this to our setting, we get that $\V(A_1)$ is the monoid generated by the set $\{[v] \mid v\in F^0\}$ subject to the relation $w(v_1) [v_1]=\sum_{\{e \in F^1 \mid s(e )=v_1 \}} [r(e)].$ 

We repeat this process to cover the whole graph. To be precise, let $A_k=L_\K(X_k,w)$, $k\geq 1$, where $X_k$ is the graph with the same vertices as $F$, but only the first $k$ vertices $\{v_1,\dots,v_k\}$ emit structured edges. By induction, $\V(A_k)$ is a commutative monoid generated by $\{[v] \mid v\in F^0\}$ subject to the relation 
$w(v_i) [v_i]=\sum_{\{e\in F^1 \mid s(e)=v_i \}} [r(\alpha)]$, where $1\leq i \leq k$. Then  $A_{k+1}=A_k \langle i,i^{-1}: \overline P \cong \overline Q\rangle$ with $P=\textstyle{\bigoplus_{w(v_{k+1})}}A_{k} v_{k+1}$ and $Q =\textstyle{\bigoplus_{\{e \in F^1 \mid s(e)=v_{k+1}\}}}A_k r(e)$.  So one more application of \cite[Theorem~5.2]{B} gives that  $\V(A_{k+1})$ is the monoid generated by all the vertices of $F$ subject to relations corresponding to $\{v_1,\dots,v_{k+1}\}$. Thus after repeating this process $m$ times we arrive at the monoid $\mathcal V(A_0)$ subject to exact same relations of Definition \ref{MsubEwdef}. 
Putting these together we have 
$\mathcal V(L_\K(F,w)) \cong M_{(F,w)}\cong M_{(E,w)}$, as desired.  

The final statement is simply the case where $k_v$ is chosen to be $1$ for all non-sink vertices $v$ of $E^0.$  We note that a proof of the final statement appears as 
\cite[Theorem 5.21]{H} (without mentioning the distinguished element). 
\end{proof}

%We note that the explicit description of $\mathcal{V}(L_K(E,w))$ as a monoid quotient of $\mathbb{F}_E$ in the general case (i.e., the non-vertex-weighted case) is more elaborate than the one presented here.   
%(We note that the results presented in \cite{P} and  \cite{Pre}  are stated and established for general weighted graphs and weighted Leavitt path algebras, and yield a definition of $M_{(E,w)}$   which is different from the one given in  ~(\ref{monvertexnosink}).)

%{\bf XXX Roozbeh;   are sinks treated differently in the above correspondence?}

%\blue{No, similar to the Ara-Moreno-Pardo, we follow the Bergman for any vertex....}

For any finite graph $E$ we let $A_E$ denote the adjacency matrix of $E$,  i.e., $(a_{ij}) \in 
%\mathbb Z^{E^0\oplus E^0}$
M_{|E^0|}({\mathbb N})$ where $a_{ij}$ is the
number of edges from $v_i$ to $v_j$.

\begin{remark}\label{computeGrothgroup}
 For a finite graph $E$, let
$I'_w$ be the \emph{weighted identity matrix of $(E,w)$}, defined to be the $|E^0| \times |E^0|$  matrix $(b_{ij})$, where
$b_{ij}=0$ for $i\not = j$ and $b_{ii}=w(v_i)$.
Denote by $N^t$ and $I_w$, respectively,  
the matrices obtained from $A_E$ and $I'_w$ by first taking the transpose, and then removing the columns
corresponding to sinks, respectively. Since the Grothendieck group of $L_\K(E,w)$ is the group completion of the commutative monoid $M_{(E,w)}$,  we obtain that 
\begin{equation}\label{Kzeroiso}
K_0(L_\K(E,w))\cong \coker\big(N^t-I_w:\mathbb Z^{E_{\reg}^0} \longrightarrow \mathbb Z^{E^0}\big).
\end{equation}

\noindent

\end{remark}

\bigskip

%Comparing $M_{(E,w)}$ in Definition \ref{MsubEwdef}  with the monoid $M(E,w)$ defined in Definition \ref{genmon}
% (\ref{monvertex}), 
%we notice a substantial difference, in that in $M_{(E,w)}$ the sinks appear as atom elements in the monoid, whereas in $M(E,w)$ the sinks vanish. 
Let $(E,w)$ be any vertex weighted graph.   If $E$ has no sinks, then $M_{(E,w)} = M(E,w)$.   On the other hand, if $E$ does contain sinks, then one may construct the weighted graph $E'$ by attaching a loop to each sink, and defining  the weight $w'$ on $E'$ to be $w$ for any edge $e\in E^1$, and $w'(e) = 1$ for any added loop.   Then clearly $M_{(E,w)} = M(E', w')$.  Consequently, by the Confluence Lemma \ref{aralem6} together with Theorem \ref{kspcts},  we get 
%We established in the Confluence Lemma ~\ref{aralem6} that any monoid of the form $M(E,w)$ satsifies the Confluence Property.   If $E$ has no sinks, then $M_{(E,w)} = M(E,w)$, and so in this case $M_{(E,w)}$ has the Confluence Property.   However,   However,  for a vertex weighted graph $(E,w)$, we may attach a loop to any sinks in $E$ (and give each of them weight $1$);   call the resulting graph $(E',w)$. It is now easy to see that $$M_{(E,w)}=M(E',w).$$ In particular the Confluence Lemma~\ref{aralem6} applies, which together with the previously stated Theorem yields

\begin{corollary}\label{Vmonoidshaveconfluence}
Let $(E,w)$ be a weighted graph.   Then  $\mathcal{V}(L_\K(E,w))$ has the Confluence Property.  
%  graph monoids $M_{(E,w)}$ 
\end{corollary}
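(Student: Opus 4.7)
The plan is to assemble the corollary directly from two prior results already in hand: the Confluence Lemma \ref{aralem6}, which asserts the Confluence Property for every graph monoid $M(E,w)$, and the $\mathcal{V}$-monoid Realization Theorem, which identifies $\mathcal{V}(L_\K(E,w))$ with the monoid $M_{(E,w)}$ of Definition \ref{MsubEwdef}. The only wrinkle is that these two theorems produce presentations that look slightly different: $M(E,w)$ of Definition \ref{genmon} imposes the relation $s=0$ at each sink $s$ of $E$, whereas $M_{(E,w)}$ imposes no relation at sinks. So before invoking the Confluence Lemma I must reconcile the two presentations.

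The reconciliation is precisely the construction sketched in the paragraph preceding the corollary. If $E$ already has no sinks, then a comparison of the two definitions yields $M_{(E,w)} = M(E,w)$ directly. If $E$ does have sinks, I form the weighted graph $(E',w')$ obtained from $(E,w)$ by adjoining a loop of weight $1$ at each sink of $E$; since $(E')^0 = E^0$ and $\mathbb{F}_{E'} = \mathbb{F}_E$, and since the added relation at each former sink $s$ reads $1\cdot s = s$ (which is trivial), the defining relations of $M(E',w')$ coincide with those of $M_{(E,w)}$, giving $M_{(E,w)} = M(E',w')$. In either case I have identified $\mathcal{V}(L_\K(E,w))$, via the isomorphism (\ref{monvmon}), with a genuine graph monoid of the form $M(F,u)$.

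With this identification, I would apply Lemma \ref{aralem6} to $M(F,u)$ directly: given $a,b \in \mathbb{F}_F$ that become equal in $M(F,u)$, the Confluence Lemma produces $c \in \mathbb{F}_F$ with $a \to c$ and $b \to c$ under the reduction system induced by the defining relations of $M(F,u)$. Since the realization isomorphism (\ref{monvmon}) sends each vertex generator $v$ to the projective class $[vL_\K(E,w)]$ and thus respects this presentation on the nose, transporting the Confluence Property across the isomorphism yields the conclusion for $\mathcal{V}(L_\K(E,w))$. I do not anticipate any serious obstacle; the entire argument is an essentially routine bookkeeping of presentations, with the only substantive point being the verification that the loop-adjunction trick really does identify $M_{(E,w)}$ with a graph monoid of the form $M(E',w')$.
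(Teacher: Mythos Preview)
Your proposal is correct and follows essentially the same route as the paper: the paragraph immediately preceding the corollary carries out exactly the loop-adjunction trick you describe (adjoining a weight-$1$ loop at each sink so that $M_{(E,w)} = M(E',w')$), and then invokes the Confluence Lemma~\ref{aralem6} together with the $\mathcal{V}$-monoid Realization Theorem. Your write-up is in fact more detailed than the paper's, which treats the corollary as an immediate consequence of that preceding paragraph.
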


%\red{New example below}

\section{Sandpile monoids and weighted Leavitt path algebras: the connection}\label{connectionsection}

Let $E$ be a row-finite graph,  and let $S$  denote the (hereditary)  subset of vertices of $E$ which do not connect to any cycle. 
%Then the quotient graph $E/S$ has no sinks. 
Consider $E$ as a balanced weighted graph $(E,w)$, so that $w(v)=|s^{-1}(v)|$ for all $v\in E_{\reg}^0$,  and consider $(E/S,w_r)$ as a weighted subgraph of $(E,w)$ as described in Remark \ref{weightonquotientremark}.  
%Note that the weight on $E/S$ is induced from that of $E$  
%(see~\S\ref{graphpremi}). 
%(see~\S\ref{weightedgraphssec}).

We are now in  position to realise the conical sandpile monoids as the $\mathcal{V}$-monoids of weighted Leavitt path algebras.

\begin{theorem} \label{conicrit}
Let $E$ be a sandpile graph,  and let $S$ denote the set of vertices of $E$ which do not connect to any cycle.  Let $w$ denote a balanced weighting on $E$.  
 Then 
 \begin{equation}\label{equhi}
 \SP(E) \big / Z(\SP(E)) \cong \mathcal V(L_\K(E/S,w_r)). 
\end{equation}
In particular  $\SP(E)$ is conical if and only if  any vertex not connected to a cycle is irrelevant. In this case
\begin{equation}\label{mainisomorphism}
% \[
 \SP(E)  \cong \mathcal V(L_\K(E/S,w_r)) \ \ \ \ \ \mbox{and} \ \ \ \ \  \mathcal{G}(E)\cong K_0(L_\K(E/S,w_r)).
%\]
\end{equation}
\end{theorem}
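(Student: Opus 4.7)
The plan is to chain together the results established in the preceding sections; the substantive work has already been done, so the proof is essentially an assembly argument. By Remark~\ref{sandpileex}, assigning the balanced weight $w$ to $E$ identifies $\SP(E)$ with the graph monoid $M(E,w)$, so I work exclusively with $M(E,w)$. Lemma~\ref{cansurv} then supplies two crucial identifications: $Z(\SP(E)) = Z(M(E,w)) = M(S,w)$, and, since $w$ is balanced and hence vertex weighted, the quotient isomorphism
\[
\SP(E)\big/Z(\SP(E)) \;=\; M(E,w)\big/M(S,w) \;\cong\; M(E/S,w_r).
\]
Remark~\ref{Sisheredsat} guarantees that $E/S$ contains no sinks, whence $M(E/S,w_r)$ coincides with the monoid $M_{(E/S,w_r)}$ of Definition~\ref{MsubEwdef}. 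The $\mathcal{V}$-monoid Realization Theorem then gives $M_{(E/S,w_r)} \cong \V(L_\K(E/S,w_r))$, and composing these isomorphisms yields~(\ref{equhi}).

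For the conicality criterion I invoke Corollary~\ref{corconic}(ii) together with Remark~\ref{conicalremark}: $\SP(E) = M(E,w)$ is conical precisely when every non-sink vertex in $S$ has balanced weight one, i.e., is irrelevant in $E$. In this conical case $Z(\SP(E)) = 0$ and~(\ref{equhi}) collapses to $\SP(E) \cong \V(L_\K(E/S,w_r))$, which is the first half of~(\ref{mainisomorphism}). Passing to group completions and using functoriality, I obtain $\SP(E)^+ \cong \V(L_\K(E/S,w_r))^+ = K_0(L_\K(E/S,w_r))$. To conclude, I identify $\SP(E)^+$ with the sandpile group $G(E)$ via Lemma~\ref{grothenlem}: the group completion of the finite commutative monoid $\SP(E)$ is its smallest ideal, which is the group of recurrent configurations, namely $G(E)$.

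The only genuinely delicate point in the assembly is the mismatch between the two monoid constructions $M(E,w)$ (which carries a relation $s = 0$ for every sink) and $M_{(E,w)}$ (which omits sink relations entirely). The $\mathcal{V}$-monoid Realization Theorem is stated for the latter and therefore requires the absence of sinks in the graph being realized; this is exactly why the theorem's conclusion unavoidably passes through the quotient $E/S$ rather than $E$ itself, and why Remark~\ref{Sisheredsat} (no sinks in $E/S$) is the hinge on which the whole argument turns. Once this is correctly accounted for, every remaining verification, including naturality of the isomorphisms under $Z$, quotients, and group completion, is immediate from the constructions.
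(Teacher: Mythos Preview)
Your proof is correct and follows essentially the same route as the paper's own argument: identify $\SP(E)$ with $M(E,w)$ via Remark~\ref{sandpileex}, apply Lemma~\ref{cansurv} to pass to $M(E/S,w_r)$, use the absence of sinks in $E/S$ to identify this with $M_{(E/S,w_r)}$, and then invoke the $\mathcal{V}$-monoid Realization Theorem; the conicality statement and the identification of $G(E)$ via Lemma~\ref{grothenlem} are likewise handled just as in the paper. Your added commentary on why the passage to $E/S$ is forced by the $M(E,w)$ versus $M_{(E,w)}$ distinction is a helpful clarification, but the underlying logical skeleton is the same.
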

\begin{proof}
%Consider $E$ as a balanced weighted graph. 
As noted in Remark  ~\ref{sandpileex},   $\SP(E) \cong M(E,w)$.  
By Proposition~\ref{cansurv}, 
$\SP(E) \big / Z(\SP(E)) \cong M(E/S,w_r).$  By the definition of $S$ we see that $E/S$ has no sinks.  So $M(E/S, w_r) = M_{(E/S, w_r)}$.   
% Since $E/S$ has no sinks, comparing the generators and relations in (\ref{monvertex})  with (\ref{monvertexnosink}) we immediately have  $M(E/S,w_r)=M_{(E/S,w_r)}$.
But by  Theorem \ref{kspcts} applied to the graph $E/S$, we get  $M_{(E/S,w_r)}\cong \mathcal V(L_\K(E/S,w_r))$. This gives the desired isomorphism (\ref{equhi}). 

By definition, the Grothendieck group $K_0(A)$ of any associative unital ring $A$ is  the group completion of  $\mathcal{V}(A)$.   So the second part of the statement follows from 
%Corollary~\ref{corconic}(ii) 
Remark  ~\ref{sandpileex} and Lemma~\ref{grothenlem} (as $\mathcal{G}(E)$ is by definition the smallest ideal of $\SP(E)$). 
\end{proof}

The statement that the conicality of a sandpile monoid is equivalent to the property that  vertices not connected to cycles are irrelevant has been established previously in a number of articles, see e.g. ~\cite[Proposition 4.2.5]{T} and \cite[Proposition 5.7]{BT}.

Here are a few examples which clarify how Theorem \ref{conicrit} plays out in some specific important situations. 
\begin{example}\label{VmonoidisoMnn+kexample}
Let $G$ denote the graph of Example \ref{Mnn+kissandpile}.   So $S = \{s\}$.    Then $(G/S, w_r)$ is the graph $E_{n,n+k}$ (defined in Example \ref{weightedgraphmonoidexample}) 
having one vertex $v$, and $n$ loops at $v$ each of weight $n+k$.   So by Example \ref{exweighted}   we have that $L_\K(G/S, w_r) \cong L_\K(n,n+k)$.    Then  Theorem \ref{conicrit} (re)establishes 
 that $$\mathcal{V}(L_\K(n,n+k)) \cong \mathcal{V}(L_\K(G/\{s\}, w_r)) \cong \SP(G) \cong M_{n,n+k} \ ,$$
 as expected.  
\end{example}

\begin{example}\label{exinfiniteme2}
Let $(E,w_3)$ be the weighted graph described in Example \ref{sizeofMdependsonweights}.  
%   and consider the weighted graph $(E, w_3)$.  
 Now consider the sandpile graph $T$ pictured here.   %     Define the weight function $w_3$.   With $w(u)=w(v)=w(z)=2$,  one can compute from~(\ref{Kzeroiso}) that  $K_0(L_\K(E,w))=\mathbb Z$. This implies that $M(E,w)$ is infinite. (If the monoid $M$ is finite, then its group completion is also finite.) However with   $w(u)=w(v)=w(z)=3$, from~(\ref{Kzeroiso}) one obtains $K_0(L_\K(E,w))=\mathbb Z_8$. 
%We note that $M(E,w)$ corresponds to the sandpile graph 
 \[ \quad {
\def\labelstyle{\displaystyle}
\xymatrix{ {} & \bullet^{u}  \ar@/^{-10pt}/[rd] \ar@/^{-10pt}/ [ld] \ar[d] &  {}\\
\bullet_{v} \ar@(d,l) \ar@/^{-10pt}/[ru] \ar[r]&  \bullet_{s} & \bullet_{z} \ar@(d,r) \ar[l]
\ar@/^{-10pt}/ [lu] \\
}}
\]

\smallskip
\noindent
Then clearly as graphs  $E = T/S$.   But $w_3$ on $E= T/S$ is precisely $w_r$ inherited from $T$, where $w$ is the balanced weight function.   So $M(E,w_3) = \SP(T)$.      In particular,  by Theorem \ref{conicrit},   $\SP(T)$ arises as the $\mathcal{V}$-monoid of the Leavitt path algebra of the weighted graph $(E, w_3).$ 

 Remark~\ref{sizeofSP(E)} gives that the size of this monoid is $3\times 3 \times 3=27$.

\end{example}

\begin{example}\label{prime}
Let $E$ be a reduced sandpile graph with $|SP(E)|=p$, a prime number. We show that either $\SP(E)\cong \mathbb Z_p$,  
%{\bf Roozbeh:   what point are we trying to make by noting that $SP(E)$  is isomorphic to a $K_0$ group?  }
%$ \cong K_0(L_\K(1,p+1))$  
or $\SP(E)\cong M_{p-l,p} \cong  \mathcal V(L_\K(E_{p-l,p}))$, for some $1\leq l<p$. 

Since the number of elements of the sandpile monoid is prime, by Remark \ref{sizeofSP(E)}   the graph $E$ must have only two vertices, namely a sink $s$ and a non-sink vertex $v$ which connects to the sink.

 If all the edges from $v$ are connected to the sink, then by the relations that define a sandpile monoid, $\SP(E)\cong M(E,w) \cong \langle v \mid pv=0\rangle$, where $(E,w)$ is the balanced weighted graph. Clearly then $\SP(E)\cong \mathbb Z_p$.   We note that in this case $\SP(E)$ is not conical (as, e.g., $1 + (p-1) = 0$), so cannot be realised as the $\mathcal{V}$-monoid of any associative ring.

 On the other hand, if $E$ consists of $p-l$ loops at $v$ and $l$ edges from $v$ to  $s$,  the relations of the monoid of the balanced weighted graph gives $\SP(E)\cong M(E \setminus \{s\}, w_r)\cong \langle v \mid pv=(p-l)v\rangle = M_{p-l,p}$. But this is exactly $\mathcal V(L_\K(E_{p-l,p}))$  by Example \ref{VmonoidisoMnn+kexample}. 
 %, where $L_\K(E_{p-l,p}) \cong L_\K(p-l,p)$ is the weighted Leavitt path algebra of $p-l$ loops of weight $p$ (see Example~\ref{exweighted}). 

Consequently, if $|SP(E)|$ is prime, then either $\SP(E)$ is not conical, or, in case $\SP(E)$ is conical, then by the observation made in Example  \ref{Mplusexample},    $\mathcal{G}(E)$ is a cyclic group of order $p-l$ for some $1\leq l < p$.   

Now let $F$ be the graph $$ \quad {
\def\labelstyle{\displaystyle}
\xymatrix{ {} & \bullet^{v_1}  \ar[rd] \ar@/^{-10pt}/ [ld] &  {} \\
\bullet_{v_3}  \ar[ru] \ar@/^{-15pt}/ [rr]&  & \bullet_{v_2}
 \ar[ll]
\ar@/^{-10pt}/ [lu] \\
}}
$$

\medskip
\noindent
The  graph $F$ appears in \cite[Example 3.8]{ClassQ}, where it is shown that $|\mathcal{V}(L_\K(F))| = 5$, and that $K_0(L_\K(F))$ is isomorphic to the non-cyclic group $  \mathbb{Z}_2 \times \mathbb{Z}_2$.    In particular, $F$ is a graph for which $\mathcal{V}(L_\K(F))$ is finite, but for which $\mathcal{V}(L_\K(F))$ is not isomorphic to $\SP(E)$ for any  sandpile graph $E$.   
\end{example}

\begin{example}\label{weightedcycle}
Let $E$ be a \emph{weighted cycle}.  That is, $E$ is  a vertex weighted graph $(E,w)$ for which $E^0 =  \{v_1, v_2, \dots , v_m\}$, $E^1 = \{e_1, e_2, \dots , e_m\}$ with $s(e_i) = v_i$ and $r(e_i) = v_{i+1}$ for $1\leq i \leq m-1$,  $s(e_m)=v_m$, $r(v_m)=e_1$, and $w(e_i) = w(s(e_i))$ is some positive integer for each $1\leq i \leq m$.   We assume (for reasons described below) that $w(v_j) \geq 2$ for at least one $v_j$.  
%  in which  there is at least one edge $e$ with $w(e) > 1$.   

  Consider the unweighted directed graph $F:=\hat E^{\op}$.   Here $\hat E$ denotes the (unweighted)  directed graph obtained from $E$ by replacing any edge in $E$ of weight $w(e) >1$  with  $w(e)$ edges;  and  $\op$ stands for the opposite of the graph, i.e.  the  graph with all edge orientations reversed.   Comparing the generators and relations of the algebra $L_\K(E,w)$ with those of $L_\K(F)$ shows that  $L_\K(E,w) \cong L_\K(F)$ as $\K$-algebras.

  Therefore by Theorem \ref{kspcts} we get 
  %~(\ref{monvmon}) 
  \[M_{(E,w)}\cong \mathcal V(L_\K(E,w)) \cong \mathcal V(L_\K(F)) \cong M_F.\]
  
  Now consider the graph $G$ defined as follows.  The vertex set of $G$ is $ G^0 := \{ s, v_1, v_2, \dots , v_m \}$.   The vertex $s$ is a sink in $G$.   The edge set $G^1$ consists of the following.   As in $E$, for each $v_i \in G^0$, $s^{-1}(v_i)$ includes the edge $e_i$.  In addition, in $G$, each $s^{-1}(v_i)$ also includes   $w(v_i)-1$ edges in $G^1$ from $v_i$ to $s$.    The assumption that $w(v_j)\geq 2$ for some $j$ ensures that there is at least one path from each $v_i$ to $s$; so $G$ is indeed  a sandpile graph.   
  
    Then by considering the generators and relations of the indicated monoids, we get 
\begin{equation}\label{threeisos}
\SP(G) \cong V(L_\K(E,w)) \cong \mathcal V(L_\K(F)).
\end{equation}

\noindent
In particular, $\SP(G)$ arises as the $\mathcal{V}$-monoid of an unweighted Leavitt path algebra.

For any positive integer $n$ let $C_n$ denote the monoid 
$$C_n = \{ 0, x, 2x, \dots, (n-1)x \}  \ \mbox{ with relation } nx = x.$$
So $C_n = M_{1,n}$ when $n\geq 2$.   Then   each of the three displayed monoids in (\ref{threeisos})  is isomorphic to the monoid $C_N$, where $N = w(v_1) \cdot w(v_2) \cdots w(v_m)$.     We note that if $n=1$ then the monoid $C_1$ is infinite;  thus the condition $w(v_i) \geq 2$ for some $i$ is necessary to ensure that this monoid is finite.  
\end{example}

As a specific instance of the ideas presented in Example \ref{weightedcycle}, suppose $(E,w)$ is the weighted cycle indicated below, where $E^0 = \{a,b,c\}$,  $w(a) = w(b) = 2$ and $w(c)=1$.    The associated (unweighted) graphs $F = \hat{E}^{op}$ and $G$ are pictured as well.        Since $N = 2\cdot 2\cdot 1 = 4$ we get that each of the monoids in (\ref{threeisos}) is isomorphic to $C_4$.    Of course this is not hard to see directly:  note that in $M(E,w)$ we have $2a = b, 2b = c,$ and $c=a$, and thus $M(E,w) = \{0,a,2a,3a\} $, where $4a = a$.

 $$  (E,w) =  \ \ {
\def\labelstyle{\displaystyle}
\xymatrix{ \bullet^{a} \ar@/^{5pt}/ [rr]^{(2)}  
%\ar@/_{5pt}/[rr]
   & &   \bullet^{b} \ar@/^{5pt}/ [dl]^{(2)} 
%\ar@/_{5pt}/[d]
 \\   & \bullet^{c} \ar@/^{5pt}/ [lu]^{(1)}
 % \ar@/_{5pt}/[l] 
 & 
}} \hskip1cm  F =  \ \ {
\def\labelstyle{\displaystyle}
\xymatrix{ \bullet^{a} \ar[rd]  
%@/^{5pt}/ [rd] 
%\ar@/_{5pt}/[rr]
   & &   \bullet^{b} \ar@/^{5pt}/ [ll] 
   \ar@/_{5pt}/[ll]
 \\   & \bullet^{c} \ar@/^{5pt}/ [ru]
  \ar@/_{5pt}/[ru] 
 & 
 }}
\hskip1cm G =  \ \ {
\def\labelstyle{\displaystyle}
\xymatrix{ \bullet^{a}     \ar@/_{20pt}/ [ddr]
%@/^{5pt}/ [rd] 
%\ar@/_{5pt}/[rr]
   & &   \bullet^{b} \ar@/_{20pt}/ [ll] \ar[dl]
   %@/^{5pt}/
 %  \ar@/_{5pt}/[ll]
 \\   
 & \bullet^{s} 
 %@/^{5pt}/ 
 % \ar@/_{5pt}/[ru]  
  \\
  & \bullet^{c} \ar@/_{20pt}/ [uur] \ar[u] & 
}} \hskip1cm 
$$

%\blue{I re-drew the graph $G$ so that it looks like a wheel} 

In Theorem \ref{conicrit}    we  established the ubiquity of weighted Leavitt path algebras in the context of sandpile monoids, namely, that every conical sandpile monoid $\SP(G)$ arises as the $\mathcal{V}$-monoid of a suitably defined  weighted Leavitt path algebra.   Perhaps surprisingly, the {\it un}weighted Leavitt path algebras (i.e., the classical Leavitt path algebras) turn out to be quite sparse in this context.  We will establish the precise statement in Proposition \ref{V(L(E))issandpile}.  

%%%%%%%%%%%%%%%%%%%%%%%%%%%%%
%   Gene's old argument about the form of graphs whose unweighted Lpa's give sandpiles.  

 We now assume that  $G$ denotes a conical sandpile graph; i.e., $G$ contains a unique sink $s$, and every vertex in $G$ connects to $s$, and (by Theorem \ref{conicrit}) every relevant vertex connects to a cycle.   
 % We assume in addition that $G$ contains no irrelevant vertices; we call such a graph {\it reduced}.     
 %We also assume that $G$ is conical (i.e. that every non-sink vertex connects to a cycle; this is if and only if $\{s\}$ is a hereditary saturated subset of $G$ by a result in Toumpakari.)     We denote the sandpile monoid of $G$ by $SP(G)$.    (We don't use $M_G$ to denote $SP(G)$ because $M_G$ might make us think of $\mathcal{V}(L_K(G))$, which it is NOT.)

%\begin{definition}\label{reduceddefinition}
%A sandpile graph $E$ is {\rm reduced} in case $E$ has no irrelevant vertices.
%\end{definition}

For context, we note that it is easy to find examples of reduced conical sandpile graphs $G$ for which there exists $v\in G^0 \setminus \{s\}$ and  $x\in \mathbb{F}_{G}$  having $x\neq v$ in $ \mathbb{F}_{G}$, but $x=v$ in $\SP(G)$.     For instance, let $v,w \in G^0$ for which $s^{-1}(w) = \{e, f_1, \dots , f_k\}$ where  $k\geq 1$ and $r(e) = v$ and  $r(f_i) = s$ for all $1\leq i \leq k$.    Then $ x := (k+1)w \neq v$ in $ \mathbb{F}_{G}$ (even if $v=w$), because $v$ is an atom in $ \mathbb{F}_{G}$.  But $x = v$ in $\SP(G)$.    

We will now show that the example described in the previous paragraph essentially describes the only possible such configurations.   

\begin{lemma}\label{vertexisnotasum}
Let $G$ be  a reduced conical sandpile graph.  Let $\mathbb{F}_{G}$ denote the free abelian  monoid on the set $G^0 $.    Suppose $v\in G^0$ and suppose there exists a nonzero element  $ x \in \mathbb{F}_G$ for which $x\neq v$  in $\mathbb{F}_G$,   but $x = v$ in $\SP(G)$.   Then there exists   a vertex $u_v\in G^0 \setminus \{s\} $ for which 
$$s^{-1}(u_v) = \{ e_v, e_{1}, \dots , e_{n_{u_v}-1}\},$$
where $n_{u_v}$ denotes $|s^{-1}_G(u_v)|$, $r(e_v) = v$ and $r(e_{i}) = s$ for all $1\leq i \leq n_{u_v} - 1$.    (Note that we allow $u_v=v$, and $e_v$ to be a  loop at $v$.)  
\end{lemma}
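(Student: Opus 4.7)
The plan is to apply the Confluence Lemma~\ref{aralem6} to the hypothesis $x=v$ in $\SP(G)$ and then read $u_v$ off a carefully chosen non-sink reduction step in the resulting sequence. Two properties of the balanced reduction on $\mathbb F_G$ are crucial throughout: toppling any non-sink vertex preserves the total grain count (the sum of all coefficients), while the reduction $s\rightarrow_1 0$ decreases it by one; and since $G$ is reduced, every non-sink vertex has weight $\geq 2$, so the generator $v\in\mathbb F_G$ (with coefficient one) admits no $\rightarrow_1$-reduction at all.

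Applying Lemma~\ref{aralem6} produces some $c\in\mathbb F_G$ with $x\rightarrow c$ and $v\rightarrow c$; because $v$ is irreducible we must have $c=v$, and since $x\neq v$ in $\mathbb F_G$ the resulting chain $x=x_0\rightarrow_1 x_1\rightarrow_1\cdots\rightarrow_1 x_m=v$ has $m\geq 1$. I would then trace this chain backwards from $v$. Because the total grain count of $v$ is $1$, the final step cannot topple a non-sink vertex (that would produce a coefficient $\geq 2$ at the toppled vertex in $x_{m-1}$), so it topples $s$ and $x_{m-1}=v+s$. Iterating, either every backward step topples $s$ and $x=v+m s$ (the degenerate situation in which $x$ and $v$ have the same non-sink content, implicitly excluded by the intended nontriviality of the hypothesis), or we reach the latest step $x_i\rightarrow_1 x_{i+1}=v+k s$ whose toppled vertex $u:=u_i$ is non-sink.

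The main work is the analysis of this single step. Writing $x_i = c\, u + y$ with $y$ not involving $u$, and comparing coefficients vertex by vertex in the identity
\[
(c-w(u))\,u + y + \sum_{e\in s^{-1}(u)} r(e) \ = \ v + k s,
\]
forces $c=w(u)$, forces $y$ to be a pure multiple of $s$, and forces the multiset $\{r(e):e\in s^{-1}(u)\}$ to consist of exactly one copy of $v$ (a loop at $v$ in the case $u=v$) together with $w(u)-1$ copies of $s$. The remaining degenerate possibility---namely that all edges of $u$ land on $s$---is excluded by conicality: by Corollary~\ref{corconic}(ii), every non-sink vertex of a reduced conical sandpile graph connects to a cycle, whereas such a $u$ would not. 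Setting $u_v:=u$ produces the required vertex, with $e_v$ the unique edge to $v$ and the remaining $n_{u_v}-1=w(u)-1$ edges going to $s$.

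The main obstacle is the coefficient-matching bookkeeping in this last step: handling the loop case $u=v$ cleanly and invoking Corollary~\ref{corconic}(ii) at precisely the point where one needs to reject the all-edges-to-$s$ configuration. Everything else is forced by the Confluence Lemma together with the grain-count invariant.
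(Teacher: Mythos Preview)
Your proof is correct and lands on the same analysis as the paper's, but via a slightly different technical route. The paper does \emph{not} invoke the Confluence Lemma: it takes any $\sim$-chain $x=a_0\rightsquigarrow\cdots\rightsquigarrow a_n=v$ (each $\rightsquigarrow$ being $\rightarrow_1$ or $\leftarrow_1$) and argues directly that the final move must be $a_{n-1}\rightarrow_1 v$, since $v$---a single non-sink vertex in a reduced graph---admits no $\rightarrow_1$-reduction. Moreover, the paper tacitly works in $\mathbb F_{G^0\setminus\{s\}}$ (the proof text writes ``$x\neq v$ in $\mathbb F_{G^0\setminus\{s\}}$'' and sums only over edges with $r(e)\neq s$), so there are no sink-topples to back through: that single final step is immediately the one to analyze, and the two-case split comes from $v$ being an atom in the free monoid. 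Your route---Confluence to obtain a pure $\rightarrow$-chain in $\mathbb F_G$, then backing through sink-topples via the grain-count invariant to locate the last non-sink topple---is a valid alternative, and your flagging of the degenerate case $x=v+ms$ correctly identifies a looseness in the lemma's statement that the paper resolves by silently excluding $s$ from the ambient free monoid. Both arguments invoke conicality at exactly the same point: to rule out the possibility that \emph{all} edges of $u_v$ land on $s$.
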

%{\bf Roozbeh:   Here is the idea, I'm sure it's true but I need to figure out how to precisely write the details.  The intuition is that a vertex $v\in G^0$ can only be written as a sum of two nonzero elements of $SP(G)$ in the situation where $v$ appears as a sort of strange artifact of one of defining equations which gives $SP(G)$ as a quotient of a free commutative monoid.   }

\begin{proof}
% Conversely,  let  $\mathcal{R}$ denote the set of canonical defining relations for $SP(G)$ as 
%the only way that a single copy of $v$ can appear in a defining relation for %suppose that no  $w\in G^0$ (including $v$ itself) has this property.   Consider the set of defining relations for 
 
% [[[  Use the Remark  from the previous writeup ]]] 

 If $x=v$ in $\SP(G)$ but $x\neq v$ in $\mathbb{F}_{G^0 \setminus \{s\}}$, then by the definition of $\sim$ (see Display (\ref{hfgtrgt6551})) 
   there exists a sequence of positive length 
  $x = a_0 \rightsquigarrow a_1 \rightsquigarrow \dots \rightsquigarrow a_{i-1}  \rightsquigarrow a_i \rightsquigarrow \dots \rightsquigarrow  a_n = v$ where  $a_{i-1}  \rightsquigarrow a_i$ denotes either $\rightarrow$ or $\leftarrow$.    
     We focus on the last move in the sequence, $a_{n-1} \rightsquigarrow  a_n = v$.   Because $G$ is reduced, each non-sink vertex of $G$ emits at least two edges, so the $\rightsquigarrow$ connecting $a_{n-1}$ to $v$ cannot be $\leftarrow$.  
     % for any $w \in G^0 \setminus \{s\}$.  
      So we must have $a_{n-1} \rightarrow v$.  That is,  for some $u_v \in G^0 \setminus \{s\}$ 
      %$\mathbb{F}_{G^0 \setminus \{s\}}$ 
      we have that $n_{u_v}u_v$ is in ${\rm supp}(a_{n-1})$, and 
  $$ ( a_{n-1} - n_{u_v}u_v ) + \sum_{\{e\in E^1 | s(e) = u_v, r(e) \neq s \} }r(e)  = v. $$
 Since $v$ is an atom in  $\mathbb{F}_{G}$, there are two possibilities:
  
  \underline{Case 1}:  $a_{n-1}  - n_{u_v}u_v = v$, and  $ \sum_{\{e\in E^1 | s(e) = u_v, r(e) \neq s \} }r(e) = 0$ ;  or
  
  \underline{Case 2}:  $a_{n-1} - n_{u_v}u_v = 0$, and  $ \sum_{\{e\in E^1 | s(e) = u_v, r(e) \neq s \} }r(e) = v$. 
  
  The equation $ \sum_{\{e\in E^1 | s(e) = u_v, r(e) \neq s \} }r(e) = 0$ of Case 1 would give that all edges $e$ for which $s(e) = u_v$ have $r(e) = s$;  since $n_{u_v} \geq 2$ this would give $n_{u_v}u_v = 0$ in $\SP(G)$, which would violate that $G$ is conical.  So Case 1 cannot occur.  
  
  But the equations given  in Case 2 describe exactly the desired configuration of vertices and edges in $G$. \end{proof}
  
  %  $w$ have we have that there is only one edges $e$  having $s(e) = w$  and $r(e) = v$;  and that there are no edges from $w$ to any other non-sink vertices; and that $a_{n-1} = n_ww$.  
 
%  So  for every $w \in G^0 \setminus \{s\}$,  the equation in $\mathcal{R}$ generated at $w$ looks like $dw = f$, where $f$ is an element of $\mathbb{F}$ and $d\geq 2$.  (Again we use that there are no irrelevant vertices in $G$.)   

%In particular, $v$ itself does not appear on the ``left hand side" of any such equation.   

%And the only way for $v$  to appear by itself as the ``right hand side" of the equation is in the graph configuration given above. 

\begin{proposition}\label{refinement}
Let $G$ be a  reduced conical sandpile graph.  Suppose in addition that $\SP(G)$ is a refinement monoid.     Then there is a partition $\mathcal{P}$ of $G^0 \setminus \{s\}$ for which the graph $G$ has the following form.

Let $P = \{v_1, v_2, \dots , v_t\} $ be an equivalence class in $ \mathcal{P}$.     Then each $v_i$ emits all but one of its edges to the sink $s$, and  this one not-to-the-sink edge $e_i$ has $s(e_i) = v_i $ for each $1 \leq i \leq t$, $r(e_i) = v_{i-1}$ for each $2 \leq i \leq t$, and $r(e_1) = v_t$.  

Denote by $G_P$ the subgraph of $G$ having  $G_P^0 = \{s, v_1, \dots , v_t\}$ and $G_P^1 = \cup_{v_i \in P}s^{-1}(v_i)$.      For equivalence classes $P \neq Q \in \mathcal{P}$, $G_P^0 \cap G_Q^0 = \{s\}$ and $G_P^1 \cap G_Q^1 = \emptyset.$
%   Call this subgraph $G_P$.   

Then $G$ is the   union of  the subgraphs $\{G_P \ | \ P \in \mathcal{P}\}$. 
%These subgraphs are disjoint. 
% except for the common sink $s$.  
\end{proposition}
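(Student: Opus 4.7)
The plan is to combine the absence of atoms in a finite conical refinement monoid (Lemma~\ref{atomcancel}) with the structural statement of Lemma~\ref{vertexisnotasum} in order to force every non-sink vertex of $G$ to emit exactly one non-sink edge; a counting argument then identifies the resulting ``successor map'' as a permutation of $G^0 \setminus \{s\}$, and its disjoint-cycle decomposition will be the desired partition $\mathcal{P}$.

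First I would observe that $\SP(G)$ is finite by Remark~\ref{sizeofSP(E)}, conical by assumption, and refinement by hypothesis, so Lemma~\ref{atomcancel} implies that $\SP(G)$ has no atoms. Consequently, for any $v \in G^0 \setminus \{s\}$ one can write $v = a + b$ in $\SP(G)$ with $a, b$ nonzero. Since $v$ is an atom of $\mathbb{F}_G$ while $a + b$ has at least two generators, $a + b \neq v$ in $\mathbb{F}_G$, and applying Lemma~\ref{vertexisnotasum} with $x := a + b$ produces a vertex $u_v \in G^0 \setminus \{s\}$ all of whose outgoing edges go to $s$ except for a single edge $e_v$ with $r(e_v) = v$.

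Next I would introduce the set $T := \{u \in G^0 \setminus \{s\} : u \text{ emits exactly one non-sink edge}\}$ together with the map $\sigma : T \to G^0 \setminus \{s\}$ that sends $u \in T$ to the range of its unique non-sink edge. The previous paragraph shows $u_v \in T$ with $\sigma(u_v) = v$, so $\sigma$ is surjective. Since $T \subseteq G^0 \setminus \{s\}$ and no function from a finite set can surject onto a strictly larger set, we conclude $T = G^0 \setminus \{s\}$ and that $\sigma$ is a bijection --- i.e., a permutation of $G^0 \setminus \{s\}$. The disjoint-cycle decomposition of $\sigma$ furnishes the partition $\mathcal{P}$; labeling an orbit as $\{v_1, \ldots, v_t\}$ so that $\sigma(v_i) = v_{i-1}$ cyclically (with $\sigma(v_1) = v_t$) recovers exactly the subgraph $G_P$ described in the statement, and reducedness of $G$ forces each $v_i$ to emit at least one edge to $s$ (otherwise $v_i$ would be irrelevant), justifying the ``all but one'' wording.

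The step I expect to require the most care is the counting collapse $T = G^0 \setminus \{s\}$: it relies on the observation that the structural description of $u_v$ in Lemma~\ref{vertexisnotasum} pins down the non-sink target $v$ from $u_v$ alone (the full out-neighborhood of $u_v$ is constrained, not merely the existence of some non-sink edge out of $u_v$), so the ``there exists $u_v$'' assertion automatically assembles into a well-defined surjection $\sigma|_T$. Once that rigidity is secured, the partition and its graph-theoretic form follow formally from the permutation structure.
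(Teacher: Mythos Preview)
Your proposal is correct and follows essentially the same approach as the paper's proof: both use Lemma~\ref{atomcancel} to rule out atoms, invoke Lemma~\ref{vertexisnotasum} to produce the vertex $u_v$ for each $v$, and then use a finiteness/pigeonhole argument to conclude that $v \mapsto u_v$ (equivalently, your $\sigma$) is a bijection whose cycle decomposition yields $\mathcal{P}$. Your formulation via the explicit set $T$ and the map $\sigma$ is a clean way to package the paper's pigeonhole step; the only minor imprecision is that $a,b$ live in $\SP(G)$ rather than $\mathbb{F}_G$, so you should take $x$ to be a lift of $a+b$ to $\mathbb{F}_G$ before applying Lemma~\ref{vertexisnotasum}.
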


\begin{proof}   By Lemma \ref{atomcancel}, the monoid $\SP(G)$ contains no atoms.  In particular each vertex $v\in G^0 \setminus \{s\}$ can be written as the sum of two nonzero elements in $\SP(G)$.  Since $v$ is an atom in $\mathbb{F}_G$ we have that the hypotheses of   Lemma \ref{vertexisnotasum} are satisfied.  So for each $v\in G^0 \setminus \{s\}$ there exists a vertex $u_v$ with the indicated configuration.   By considering this  configuration,  if a vertex $u \in G^0 \setminus \{s\}$ arises as $u_v$ for some vertex $v$, then $v$ is unique. (In other words, if $u \in G^0 \setminus \{s\}$ has $u = u_v = u_{v'}$ for vertices $v, v'$ in $G^0 \setminus \{s\} $, then $v = v'$.)  
%there can be at most one such $w_v$ for each $ v\in G^0 \setminus \{s\} $.    
 Since such a configuration must occur  for every $v\in G^0 \setminus \{s\}$, by the Pigeon Hole Principle applied to $G^0 \setminus \{s\}$ (recall that by definition any sandpile graph is finite)  we have that each $u\in G^0 \setminus \{s\}$ appears exactly once as $u_v$ for some (unique) $v\in G^0 \setminus \{s\}$.  Now partition $G^0 \setminus \{s\}$ using the transitive closure of this relationship.    
 \end{proof}

\begin{corollary}\label{SP(G)isdirectsumCsubn}
Suppose $G$ is a  conical  sandpile graph for which $\SP(G)$ is a refinement monoid.  Then  
\[\SP(G) \cong  \bigoplus_{i=1}^t C_{n_i},\]
 for some  integers $n_1, \dots , n_t \geq 2$.  Conversely, any monoid of the form $ \bigoplus_{i=1}^t C_{n_i}$ with $n_1, \dots , n_t \geq 2$ is a conical sandpile monoid which is also  a refinement monoid.   
\end{corollary}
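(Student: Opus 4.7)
The plan is to prove both directions by applying the structural results already established in this section.

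For the forward direction, assume $\SP(G)$ is conical and refinement. By Remark \ref{sandpilemonoidremark} we may replace $G$ by its reduction without altering $\SP(G)$, so assume $G$ is reduced. Proposition \ref{refinement} then yields a partition $\mathcal{P}$ of $G^0 \setminus \{s\}$ under which $G$ is the disjoint union (sharing only $s$) of the cyclic subgraphs $G_P$. The first step is to observe that this graph-theoretic decomposition passes to a direct sum decomposition
\begin{equation*}
\SP(G) \;\cong\; \bigoplus_{P \in \mathcal{P}} \SP(G_P \cup \{s\}),
\end{equation*}
since the toppling relation at each non-sink vertex involves only vertices within its own $G_P$ (edges to $s$ contribute $0$).

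Next, each $G_P \cup \{s\}$ is of precisely the form built in Example \ref{weightedcycle}: the underlying weighted cycle is the cycle inside $G_P$, with weights $w(v) := |s^{-1}(v)|$, and attaching $w(v)-1$ edges from each $v \in P$ to the sink recovers $G_P \cup \{s\}$. Example \ref{weightedcycle} then gives $\SP(G_P \cup \{s\}) \cong C_{N_P}$ with $N_P := \prod_{v \in P} |s^{-1}(v)|$. Since $G$ is reduced, each $|s^{-1}(v)| \geq 2$, so $N_P \geq 2$, and the forward direction follows.

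For the converse, given $n_1, \ldots, n_t \geq 2$, construct $G$ with sink $s$ and pairwise disjoint vertices $v_1, \ldots, v_t$, attaching at each $v_i$ one loop together with $n_i - 1$ edges to $s$. Every non-sink vertex lies on a cycle, so Corollary \ref{corconic}(ii) gives that $\SP(G)$ is conical. The same direct-sum argument combined with the single-vertex case of Example \ref{weightedcycle} yields $\SP(G) \cong \bigoplus_{i=1}^t C_{n_i}$. Each $C_{n_i} = M_{1,n_i}$ is refinement by Example \ref{Mplusexample}, and a standard componentwise check shows that a direct sum of refinement monoids is again refinement.

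The step I expect to be the main obstacle is justifying cleanly the direct sum decomposition of $\SP(G)$ along the partition of Proposition \ref{refinement}. One must verify that no $\sim$-equivalence in $\mathbb{F}_G$ can couple different $G_P$ components. The Confluence Lemma \ref{aralem6} reduces this to analysing $\rightarrow_1$ reductions, each of which is manifestly local to a single $G_P$ (once grains arriving at $s$ are identified with $0$); hence two equivalent elements must have components that are themselves equivalent within the corresponding $\SP(G_P \cup \{s\})$.
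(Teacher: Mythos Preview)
Your proof is correct and follows essentially the same route as the paper's: reduce via Remark~\ref{sandpilemonoidremark}, invoke Proposition~\ref{refinement} for the structure, and then apply Example~\ref{weightedcycle} to each piece. You supply considerably more detail than the paper's one-line proof, in particular the explicit direct-sum justification and the fully worked converse, but the underlying argument is the same.
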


\begin{proof}  This follows directly from the observations made in   Example \ref{weightedcycle}, together with Remark \ref{sandpilemonoidremark} and Proposition \ref{refinement}.  
%   Roozbeh showed in Example 8 of some previous notes that $SP(G_P)$ (as defined in the proof of the previous result)  is isomorphic to $C_n$ for some $n\in \mathbb{N}$.   \ \ \ \ $\Box$
\end{proof}

\begin{proposition}\label{V(L(E))issandpile}
Let $E$ be a finite graph.   Let $L_\K(E)$ denote the (unweighted) Leavitt path algebra.  Then the monoid $\mathcal{V}(L_\K(E))$ is isomorphic to the   sandpile monoid $\SP(G)$ of a  sandpile graph $G$ if and only if $\mathcal{V}(L_\K(E)) \cong \oplus_{i=1}^t C_{n_i}$ for some positive integers $n_1, \dots , n_t$.  
\end{proposition}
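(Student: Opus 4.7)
The plan is to reduce both directions of the equivalence to Corollary~\ref{SP(G)isdirectsumCsubn}, exploiting two classical structural properties of the $\mathcal{V}$-monoid of an unweighted Leavitt path algebra, namely that it is always conical and refinement.

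For the ``if'' direction, I would assume $\mathcal{V}(L_\K(E)) \cong \bigoplus_{i=1}^t C_{n_i}$ (tacitly with each $n_i \geq 2$, since $C_1 \cong \mathbb{N}$ is infinite and no infinite monoid can be a sandpile monoid by Remark~\ref{sizeofSP(E)}). The converse half of Corollary~\ref{SP(G)isdirectsumCsubn} immediately produces a sandpile graph $G$ with $\SP(G) \cong \bigoplus_{i=1}^t C_{n_i}$, so $\mathcal{V}(L_\K(E)) \cong \SP(G)$ as required.

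For the ``only if'' direction, I would assume $\mathcal{V}(L_\K(E)) \cong \SP(G)$ for some sandpile graph $G$ and transfer three properties across the isomorphism to $\SP(G)$: finiteness (every sandpile monoid is finite by Remark~\ref{sizeofSP(E)}), conicality (since $\mathcal{V}(A)$ is always conical for a unital ring $A$), and refinement (since $\mathcal{V}(L_\K(E)) \cong M_E$ via~(\ref{weightonemon}), and the monoid $M_E$ of a row-finite unweighted graph is known to be a refinement monoid, as recalled in the introduction). With $\SP(G)$ then finite, conical, and refinement, the forward half of Corollary~\ref{SP(G)isdirectsumCsubn} yields $\SP(G) \cong \bigoplus_{i=1}^t C_{n_i}$ with $n_i \geq 2$, and this description transfers back to $\mathcal{V}(L_\K(E))$.

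The main obstacle I foresee is not any hidden difficulty but a bookkeeping point: the refinement property of $M_E$ is used decisively in the ``only if'' direction yet is only alluded to in the paper, so I would need an explicit citation of the classical Ara--Moreno--Pardo result. A smaller but genuine concern is fixing the convention $n_i \geq 2$ in the statement, since the case $n_i = 1$ would otherwise make the ``if'' direction strictly false (as $C_1 \cong \mathbb{N}$ is infinite, hence not a sandpile monoid).
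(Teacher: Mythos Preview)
Your proposal is correct and follows essentially the same route as the paper: both directions are reduced to Corollary~\ref{SP(G)isdirectsumCsubn}, using that $\mathcal{V}(L_\K(E))$ is conical (as for any unital ring) and refinement (the paper cites \cite[Theorem~3.6.8]{TheBook} rather than Ara--Moreno--Pardo directly). Your observation about the $n_i \geq 2$ convention is apt; the paper's statement is slightly loose on this point, though Corollary~\ref{SP(G)isdirectsumCsubn} itself does impose $n_i \geq 2$.
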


\begin{proof}   Sufficiency of the statement follows from Corollary  \ref{SP(G)isdirectsumCsubn}.  

 For necessity, we note that   the $\mathcal{V}$-monoid of any ring is conical, while the $\mathcal{V}$-monoid of the (unweighted)  Leavitt path algebra $L_\K(E)$ of any graph $E$ is   refinement (see e.g. \cite[Theorem 3.6.8]{TheBook}).  So if $\mathcal{V}(L_\K(E)) \cong \SP(G)$ for some  sandpile graph $G$ then  Corollary \ref{SP(G)isdirectsumCsubn} gives the result.   
 \end{proof}

%\bigskip

%{\bf XXX  Roozbeh:  The remark below could be interesting, but I'm not sure how to phrase it.     Why are we interested in the sign of $det(I - A_F)$?   We know what $F$ is explicitly, and we can show that this determinant is negative.   So this shows that these weighted cycle graphs have Lpa's isomorphic to matrix rings over Leavitt algebras.   But the question here is to describe ANY $L_K(E)$ whose $\mathcal{V}$-monoid is a direct sum of $C_n$.     \ \ \ Also:  if we have the $\mathcal{V}$-monoid is the  direct sum of $C_n$'s, can we conclude that the algebra is a direct sum of appropriate ideals?}

\begin{remark}\label{whatisL(E)whenV(L(E))issandpile}   By Proposition \ref{V(L(E))issandpile} we know the structure of the $\mathcal{V}$-monoid of any unweighted Leavitt path algebra whose $\mathcal{V}$-monoid is a finite sandpile monoid. 
%   We drill down to try to understand the structure of the Leavitt path algebra itself.   
We discuss  the specific case where  $\mathcal{V}(L_\K(E)) \cong C_n$ (i.e., that there is only one term in the direct sum given in Proposition  \ref{V(L(E))issandpile}).    By \cite[Theorem 3.1.10 or Lemma 6.3.14]{TheBook},  $\mathcal{V}(L_\K(E)) \cong C_n$ yields that $L_\K(E)$ is a purely infinite simple algebra.  
%, where $F$ is the graph given in Example \ref{weightedcycle}.  One can easily calculate (using (\ref{weightonemon}) and  (\ref{Kzeroiso}))  that $M_F\cong \langle v \, | \, n v =v\rangle$ and  $K_0(L_\K(F))\cong M_F^+ \cong \mathbb Z_{n-1}$.    
Moreover, $K_0(L_\K(E)) \cong \mathbb{Z}_{n-1}$.      Fix any isomorphism $\varphi: K_0(L_\K(E)) \to \mathbb{Z}_{n-1}$, and let $t$ denote $ \varphi([L_\K(E)])$.     

Let $R_n$ denote the graph having one vertex and $n$ loops at that vertex.  It is well known that  $K_0({\rm M}_\ell(L_\K(R_n))) \cong  \mathbb{Z}_{n-1}$, for each $\ell \in \mathbb{N}^+$, and that under this isomorphism $[{\rm M}_\ell(L_\K(R_n))] \mapsto \ell \in \mathbb{Z}_{n-1}$.  
% Let $A_F$ be the adjacency matrix of $F$,  which is of the form of a generalised permutation matrix with non-zero entries $d_1,\dots,d_l$.   
   % and $\sign(A_F)$ is the sign of the permutation matrix. 
 Therefore $K_0(L_\K(E)) \cong K_0({\rm M}_t(L_\K(R_n)))$, via a map that takes $[L_\K(E)]$ to $[{\rm M}_t(L_\K(R_n))]$.  Let $R_{n,t}$ denote the standard graph having $L_\K(R_{n,t}) \cong {\rm M}_t(L_\K(R_n))$.  Indeed, $R_{n,t}$  is the graph $R_n$ with a path of length $t-1$ attached to the vertex;  this is exactly the construction described in the proof of Theorem \ref{kspcts}.      It is well-known that ${\rm det}(I - A_{R_{n,t}}) $ is negative. 
 
If $E$ has the property that ${\rm det}(I - A_E) <0$, then  by the Classification Theorem of purely infinite simple Leavitt path algebras \cite[Theorem 6.3.32]{TheBook}, we  obtain that $L_\K(E)$ is isomorphic to   $  M_t(L_\K(R_n))$.   

On the other hand, if $E$ has the property that ${\rm det}(I - A_E) > 0$, then as of the writing of this article we do not know whether $L_\K(E)$ is necessarily isomorphic to $  {\rm M}_t(L_\K(R_n))$.  We note that  this barrier  lies at the heart of the longstanding {\it Algebraic Kirchberg Phillips Question for Leavitt Path Algebras of Finite Graphs}, see e.g. \cite[Question 6.3.3]{TheBook}.    (Note: the situation ${\rm det}(I - A_E) =0$ cannot occur for such $E$.)  

%We comment below  \red{(XXXX)}   on how to handle  the more general situation (in which  $\mathcal{V}(L_\K(E)) $ is isomorphic to a finite direct sum of monoids $\oplus_{i=1}^t C_{n_i }$ with $t\geq 2$.   
%  ;  moreover,  an easy computation yields that  $\det(I-A_F^t)$ is negative as well.  % $M_{(E,w)} \cong M_F\cong  \mathcal V(L_n)$.   

%Let $E$ be a finite connected graph such that $L_\K(E)$ is a sandpile algebra. Then Proposition~\ref{refinement} and its proof implies that $L_\K(E)$ is purely infinite simple with $K_0(L_\K(E))\cong \mathbb Z_n$, where $n\in \mathbb N$.  If the algebraic Kirchberg-Phillips classification question in the setting of Leavitt path algebra holds (see~\cite[\S 7.3.1]{TheBook}), then one can in fact conclude that $L_\K(E)$ is a sandpile algebra if and only if $L_\K(E)$ is isomorphic to a  Leavitt algebra $L_{n}$.  \red{check this statement}. 

\end{remark}

We conclude this article by presenting the following natural definition.   An investigation into various properties of these structures will be taken up in forthcoming work.  

\begin{deff}
Let $\K$ be a field.  We call a $\K$-algebra $A$ a {\it sandpile  $\K$-algebra} in case there exists a conical sandpile graph $E$ for which 
$$A \cong L_\K(E/S, w_r),$$
where $S$ denotes the set of vertices of $E$ which do not connect to a cycle, and $w_r$ is the restriction of the balanced weighting on $E$ to $E/S$.   
%Let $(E,w)$ be a vertex weighted graph. We say the weighted Leavitt path algebra $L_\K(E,w)$ is a \emph{sandpile algebra} if  ${\rm SP}(E)$ $\mathcal V(L_K(E,w))$ is a sandpile monoid. 
\end{deff}

%Theorem~\ref{conicrit} gives that if $E$ is a sandpile graph, and $S$ is the set of vertices of $E$ which do not connect to any cycles, then $L_\K(E/S,w_r)$ is a sandpile algebra. 
In particular if $E$ is a sandpile graph such that each non-sink vertex connects to a cycle, then considering $E$ as a balanced weighted graph,  $L_\K(E / \{s\},w_r)$ is a sandpile $\K$-algebra.      Among other things, the collection of  sandpile $\K$-algebras provides an umbrella (significantly  smaller than the umbrella provided by the collection of weighted Leavitt path $\K$-algebras) under which {\it all} of Leavitt's $\K$-algebras $L_\K(n,n+k)$ (for any pair $n,k \in \mathbb{N}^+$) may be realised.

\section{Acknowledgements}
A part of this work was done when the second author was an Alexander von Humboldt Fellow at the University of M\"unster in the Winter of 2021. He would like to thank both institutions for an excellent hospitality.

%\bigskip

%\begin{example}\label{generalizedLeavittissandpile}
%Let   $E$ denote the weighted graph 
%\begin{equation*}
%\xymatrix{
%& \bullet \ar@{.}@(l,d) \ar@(ur,dr)^{y_{11},\dots,y_{n1}} \ar@(r,d)^{y_{12},\dots,y_{n2}} \ar@(dr,dl)^{y_{13},\dots,y_{n3}} \ar@(l,u)^{y_{1,n+k},\dots,y_{n,n+k}}& 
%}
%\end{equation*}
%pictured in Example \ref{exweighted}.   Then $L_\K(E,w) \cong L_\K(n,n+k)$ is a sandpile algebra, as follows.  As previously noted we have  $\mathcal{V}(L_\K(E,w)) \cong M_{n,n+k}, $  where $M_{n,n+k}$ is the monoid $ \{0,x,2x, \dots , (n+k-1)x\}$ with relation $(n+k)x = nx$.      But easily $M_{n,n+k} \cong \SP(G_{n,n+k})$, where $G_{n,n+k}$ is the sandpile graph having  two vertices $v, s$, with $n$ loops at $v$ and $k$ edges from $v$ to $s$.  
%\end{example}

%%%%%%%%%%%%%%%%%%%%%%%%%%%%%%%%%%%%%%%%

%  Now continue with Roozbeh's stuff

\bigskip
\bigskip

\end{document}